\newif\ifcomments
\let\newComments\newKibitzer
 \newComments\EG{EG}{blue}
\newComments\LV{LV}{red}
\title{Nilpotent orbits of Kac-Moody algebras and its parameterization for $\s$}
\author{Esther Galina$^{1,2}$, Lorena Valencia$^2$}
\date{%
    $^1$Universidad Nacional de Córdoba, Facultad de Matemática, Astronomía, Física y Computación (FAMAF), Córdoba, Argentina\\%
    $^2$CIEM, Universidad Nacional de Córdoba, CONICET, Córdoba, Argentina\\%
    \today
}
\begin{document}

\newcommand{\G}{\mathcal G}
\newcommand{\Gd}{\dot{G}}
\newcommand{\Gm}{\mathcal G^{min}}
\newcommand{\oL}{\overline{\mathfrak L}}
\newcommand{\LGo}{\overline{\mathfrak{L}}(\dot{G})}
\newcommand{\LGh}{\widehat{\mathfrak{L}(\dot{G})}}
\newcommand{\LPSo}{\overline{\mathfrak{L}}(\mathrm{PSl}_n(\mathbb{C}))}
\newcommand{\LSo}{\overline{\mathfrak{L}}(\mathrm{Sl}_n(\mathbb{C}))}
\newcommand{\LS}{\mathfrak{L}(\mathrm{Sl}_n(\mathbb{C}))}
\newcommand{\g}{\mathfrak g}
\newcommand{\hg}{\hat{\mathfrak g}}
\newcommand{\gd}{\dot{\mathfrak{g}}}
\newcommand{\Lg}{\mathfrak{L}(\dot{\mathfrak{g}})}
\newcommand{\Lgbar}{\widetilde{\mathfrak{L}}(\dot{\mathfrak{g}})}
\newcommand{\Lggor}{\hat{\mathfrak{L}}(\dot{\mathfrak{g}})}
\newcommand{\gc}{\hat{\mathfrak g}}
\newcommand{\nm}{\mathbb{C}[[t]]\otimes\dot\eta^+}
\newcommand{\nk}{\mathcal K\otimes\dot\eta^+}
\newcommand{\h}{\mathfrak h}
\newcommand{\hd}{\dot{\mathfrak{h}}}
\newcommand{\ms}{\mathfrak s}
\newcommand{\A}{\mathcal A}
\newcommand{\B}{\mathcal B}
\newcommand{\bo}{\mathfrak b}
\newcommand{\C}{\mathbb C}
\newcommand{\K}{\mathcal K}
\newcommand{\m}{\mathfrak{m}}
\newcommand{\n}{\eta}
\newcommand{\nd}{\dot{\eta}}
\newcommand{\ngor}{\hat{\eta}^{+}}
\newcommand{\Or}{\mathcal O}
\newcommand{\W}{\mathcal W}
\newcommand{\Wd}{\dot{\mathcal W}}
\newcommand{\+}{\oplus}
\newcommand{\glK}{\mathfrak{gl}_n(\mathcal{K})}
\newcommand{\gln}{\mathfrak{gl}_n(\mathbb{C})}
\newcommand{\sln}{\mathfrak{sl}_n(\mathbb{C})}
\newcommand{\so}{\mathfrak{sl}_2(\mathbb{C})}
\newcommand{\s}{\mathfrak{sl}_n^{(1)}(\mathbb C)}
\newcommand{\sgor}{\hat{\mathfrak{sl}}_n^{(1)}(\mathbb C)}
\newcommand{\st}{\mathfrak{sl}_3^1}
\newcommand{\PSl}{\mathrm{Sl}_n(\mathbb{C})}
\newcommand{\Slk}{\mathrm{Sl}_n(\mathbb{C}[[t]])}
\newcommand{\slnk}{\mathfrak{sl}_n(\mathbb{C}[[t]])}
\newcommand{\Ad}{\mathrm{Ad \ }}
\newcommand{\ad}{\mathrm{ad}}
\newcommand{\adg}{ad_{\mathfrak g}}
\newcommand{\adt}{\widetilde{\mathrm{ad}}}
\newcommand{\adc}{ad_{\mathfrak{g}_{comp}}}
\newcommand{\Exp}{\mathrm{Exp}}
\newcommand{\Aut}{\mathrm{Aut}}
\newcommand{\End}{\mathrm{End}}
\newcommand{\TS}{\{ H,X,Y \} }
\newtheorem{theorem}{\textbf{Theorem}}[section]
\newtheorem{prop}[theorem]{\textbf{Proposition}}
\newtheorem{cor}[theorem]{\textbf{Corollary}}
\newtheorem{lemma}[theorem]{\textbf{Lemma}}
\newtheorem{defn}{\textbf{Definition}}[section]
\newtheorem{obs}{\textit{Remark}}[section]
\newtheorem{exmp}{\textbf{Example}}[section]

\maketitle

\begin{abstract}
In the context of affine complex Kac-Moody algebras, we define the meaning of nilpotent orbits under the adjoint action of the maximal Kac-Moody group. We also give a parameterization of nilpotent orbits of $\s$.
\end{abstract}

\section{Introduction} 

Let $\gd$ be a finite dimensional complex semisimple Lie algebra. Consider the adjoint group
$\dot{G}_{\ad}$ of $\gd$ and the adjoint representation $\Ad$  of $\dot{G}_{\ad}$ over $\gd$. We say $x\in\gd$
is nilpotent in $\gd$ if $\ad x$ is a nilpotent endomorphism over $\gd$. If $x\in\gd$ is  nilpotent  then so is $\Ad g (x)$ for all $g\in\dot{G}_{\ad}$. This fact allows us to define nilpotent orbits in a semisimple Lie algebra: if $x\in
\dot{G}_{\ad}$ is nilpotent, then the \textit{nilpotent orbit} through $x$ is
$$\mathfrak{O}_x=\Ad \dot{G}_{\ad} (x).$$

Nilpotent orbits have several applications in representation  theory of Lie algebras and groups, algebraic groups, Weyl groups, and other related objects. For that, there has been huge interest in their study and classification. Dynkin and Kostant gave a classification of such orbits by using weighted Dynkin diagrams [Co]. In the particular case $\gd=\mathfrak{sl}_n(\C)$, as it is a semisiple Lie algebra over an algebraically closed field, the nilpotent elements in $\gd$ are the nilpotent endomorphisms. In general, for the classical complex semisimple  Lie algebras, the normal Jordan form allows us to parameterize the nilpotent orbits of $\gd$ by partitions of $n$. In the case of  exceptional complex semisimple  Lie algebras, even though there is a correspondence  between nilpotent orbits and weighted Dinkyn diagrams, Bala and Carter determined which weighted Dynkin diagram corresponds to a nilpotent orbit of the algebra. For classical and exceptional real semisimple Lie algebras the classification was done by Djokovic in \cite{Dj2} and \cite{Dj1}. Later, Nöel gives an analogous of the Bala-Carter classification for the real case \cite{N} and there is a way to associate a weighted Vogan diagram to each real nilpotent orbit in \cite{Ga}.

The complex Kac-Moody algebras are generalizations of finite-dimensional complex semisimple Lie algebras defined by generators and relations from a generalized Cartan matrix.
There are three types of indecomposable Kac-Moody algebras:  those of finite type (finite-dimensional semisimple Lie algebras), the affine and the indefinite ones. The affine Kac-Moody algebras are the most widely studied  infinite dimensional Lie algebras and they have several applications. They are completely classified and they have a very interesting representations theory. Both, the classification and its representation theory, are similar to those of simple Lie algebras. 

This analogy has motivated the question of whether it is possible to speak of nilpotent orbits in the affine context and, in this case, how to obtain a classification or parameterization of these objects.
The results obtained, presented in this paper, are the product of the doctoral thesis of Lorena Valencia at the Universidad Nacional de Córdoba (Argentina).

As a first step,  it is necessary to choose an appropriate group acting on the affine Kac-Moody algebra that assumes the role of the adjoint Lie group in the case of simple Lie algebras of finite dimension. 

Let's $\gd$ a complex simple Lie algebra and $\mathfrak{L}(\dot\g):=\C[t,t^{-1}]\otimes_{\C}\gd$ the loop algebra associated with $\gd$. Then, the algebra
$$\g=\mathfrak{L}(\dot\g)\oplus\C c\+\C d$$ 
is an  \emph{affine Kac Moody algebra} with bracket given by
\begin{align*}
[t^m\otimes x+\lambda c+\mu d,&t^n\otimes y+\lambda'c+\mu'd]= \\
    &=t^{m+n}\otimes[x,y] +\mu n t^n\otimes y-\mu'mt^m\otimes x+m\delta_{m,-n}\langle x,y\rangle c
\end{align*}
where $\langle,\rangle$ denotes the Killing form over $\gd$ and
$\lambda,\mu,\lambda',\mu'\in\C$, $m,n\in\mathbb{Z}$,
$x,y\in\dot\g$.

Define the extension of $\g$ as the algebra
$$\hg:=\C[[t]][t^{-1}]\otimes_{\C}\gd\+\C c\+\C d$$ 
whose bracket  naturally extends the bracket of $\g$.

The correspondence between finite-dimensional complex semisimple Lie algebras and connected and simply connected Lie groups has been extended by Kac-Peterson to the correspondence between complex Kac-moody algebras and certain simply connected topological groups known as Kac Moody groups \cite{Ku}.

In the literature, there are several constructions of Kac-Moody groups associated with a Kac-Moody algebra. We will work with the maximal Kac-Moody group \cite{Ku}. It has different presentations. In the more general case, it is the amalgamated product of a certain system of groups. We will adopt a special presentation for Kac-Moody groups associated with affine Kac-Moody algebras based on the loop group of the finite-dimensional Lie algebra.

In general, a maximal Kac-Moody groups $\G$ is an ind-algebraic group and shares several important properties with simple algebraic groups, as the Bruhat decomposition and the generalized Gaussian decomposition.

Denote by $\Ad$ the adjoint representation of $\G$ over the algebra $\hg$.  We say that  $X\in\g$ (resp. $X\in\hg$) is \emph{locally nilpotent} over $\g$ (resp. over $\hg$), if the endomorphism 
$\ad_{\g} X$   is locally nilpotent over $\g$ (resp. $\ad_{\hg}$ is locally nilpotent over $\hg$). 
All the locally nilpotent elements of an extended affine Kac-Moody algebra are actually nilpotent. 
Moreover, for all nilpotent element  $X\in\hg$ and $g\in\G$, $\Ad g(X)$ is also nilpotent. Therefore, nilpotent orbits in $\hg$ make total sense. 

In this work, we present some properties of nilpotent elements of affine Kac-Moody algebras and we give a parametrization of the nilpotent orbits of  $\g=\s$.

The paper is organized as follows. In section 2 we defined Kac-Moody algebras and groups as extensions of loop algebras and loop groups respectively. Section 3 presents the central objects of this work, that is nilpotent elements and nilpotent orbits of affine Kac-Moody algebras under the adjoint action of the group $\G$ over the algebra $\hg$, and some structure results of these orbits. In particular, we prove every nilpotent element in $\g$ is conjugated by $\G$ to an element of $\nm\+\C c$, moreover, every nilpotent element is $\G$-conjugated  to one whose component in $\mathfrak{sl}_n(\C[[t]][t^{-1}])$ is an upper triangular matrix with entries in $\C[[t]]$.
Section 4 focuses on nilpotent orbits of the affine Kac-Moody algebra $\s$. We introduce a set of elements in $\mathfrak{sl}_n(\C[[t]][t^{-1}])$ denominated cuasi-Jordan matrix, whose entries are all zero except at the entries  $(i,i+1)$. We obtain that every nilpotent orbit contains an element of type $D+\lambda c$, where $D$ is a cuasi-jordan matrix and $\lambda\in\C$. These distinguished elements in each nilpotent orbit of $\s$ permits us to obtain the principal result of this paper, the classification of the nilpotent orbits in $\s$, given by the following theorem:

\textbf{Theorem:}  There is a biyective correspondence between nilpotent orbits in $\s$ and the set 
$\{(\sigma,j):\sigma=[i_1,...,i_d]\in\mathcal{P}(n) \ \mathrm{ y } \ 0\leq j<i_d\}\times\C$. 

\section{Preliminaries}

\subsection{Kac Moody algebras} 
Let $\A=\C[t,t^{-1}]$ be the algebra of Laurent polynomials and
$\mathcal{K}=\C[[t]][t^{-1}]$ the field of Laurent polynomial series.
Consider $\dot\g$ a finite dimensional complex semisimple Lie algebra  and $\mathfrak{L}(\dot\g):=\A\otimes_{\C}\dot\g$ the loop algebra associated with $\gd$, with the Lie algebra structure given by
\begin{equation}\label{loop}
    [P\otimes x,Q\otimes y]=PQ\otimes[x,y]
\end{equation}
for any $P,Q\in\A$, $x,y\in\dot\g$.

Denote by $\widetilde{\mathfrak{L}}(\dot\g)$ the central extension of Lie algebra $\mathfrak{L}(\dot\g)$ associated with the  2-cocycle $\nu$ defined over $\mathfrak{L}(\dot\g)$ by
$$\nu(P\otimes x,Q\otimes y)=res(\frac{dP}{dt}Q)\kappa(x,y)$$
where $P,Q\in\A$, $x,y\in\gd$ and $\kappa$ is the Killing form of $\gd$.

Then,
$\widetilde{\mathfrak{L}}(\dot\g):=\mathfrak{L}(\dot\g)\oplus\C c$ and its bracket is given by 
\begin{gather}
[f,c]=0  \\
    [f,g]=[f,g]_{\mathfrak{L}(\dot\g)}+\nu(f,g)c
\end{gather}
for all $f,g\in\mathfrak{L}(\gd)$, where
$[f,g]_{\mathfrak{L}(\dot\g)}$ is the bracket of
$\mathfrak{L}(\dot\g)$.

The  \emph{(untwisted) affine Kac Moody  algebra }associated with $\dot\g$ is the algebra
\begin{center}
$\g=\mathfrak{L}(\dot\g)\oplus\C c\+\C
d=\widetilde{\mathfrak{L}}(\dot\g)\+\C d.$
\end{center}
It is the result of adding a derivation $d$ to 
$\widetilde{\mathfrak{L}}(\dot\g)$, where $d$ acts on 
$\mathfrak{L}(\dot\g)$ by $t\frac{d}{dt}$ and annihilates $c$. The bracket is given by
\begin{multline}\label{corchete}
    [t^m\otimes x+\lambda c+\mu d,t^n\otimes y+\lambda'c+\mu'd]=\\ 
    =t^{m+n}\otimes[x,y] +\mu n t^n\otimes
y-\mu'mt^m\otimes x+m\delta_{m,-n}\langle x,y\rangle c
\end{multline}
for $\lambda,\mu,\lambda',\mu'\in\C$, $m,n\in\mathbb{Z}$ and
$x,y\in\dot\g$.

The application $x\mapsto 1\otimes x$ is an embedding of Lie algebras, 
$\gd\hookrightarrow\g$. So, we can consider the algebra 
$\gd$ as a subalgebra of $\g$. If $\dot{\h}$ is a 
Cartan subalgebra of $\gd$, then $\h=\dot{\h}\oplus\C
c\oplus\C d$ is a Cartan subalgebra of $\g$. Moreover, if $\{e_i,f_i|i=1,\cdots,n\}$ are the Chevalley's generators of $\gd$, then $\{E_i,F_i|i=0,\cdots,n\}$ are the Chevalley's generators of $\g$, where $E_i=1\otimes e_i$, $F_i=1\otimes f_i$ for $i=1,\cdots,n$ and $E_0, F_0$ are obtained as follows: consider $\omega$ the linear involution of $\dot\g$, defined by
\begin{center}
$\omega(e_i)=-f_i, \ \omega(f_i)=-e_i, \ \omega(h_i)=-h_i,$
\end{center}
choose $f_0\in\dot\g_{\theta}$, where $\theta\in\dot{\Delta}$ is the  root of maximal weight, such that 
\begin{center}
$\langle f_0,\omega(f_0)\rangle=-1$
\end{center}
and define $e_0=-\omega(f_0)\in\dot\g_{-\theta}$. Therefore,
\begin{center}
$E_0=t\otimes e_0, \ F_0=t^{-1}\otimes f_0$
\end{center}

Let's $\delta\in\h^*$ given by $$\delta|_{\hd\+\C c}\equiv 0, \ \delta(d)=1.$$
Set $\Pi=\{\alpha_0=\delta-\theta\,\alpha_1,\cdots,\alpha_n\}$ the simple root system of $\g$, where $\dot\Pi=\{\alpha_1,\cdots,\alpha_n\}$ is the simple root system of $\gd$.

Define the completeness of 
 $\g$ as the algebra
\begin{equation}\label{hg}
\hat{\g}:=\K\otimes_{\C}\gd\+\C c\+\C d
\end{equation}
with the linear extension of the bracket given by (\ref{corchete}). As a finite number of negative powers of $t$ are allowed, this bracket has sense.

The set of roots of $(\g, \h)$  is  $$\Delta=\{j\delta;j\in\mathbb Z
\smallsetminus\{0\}\}\cup\{j\delta+\beta;j\in\mathbb
Z,\beta\in\dot\Delta\},$$  where $\dot\Delta$ is the root system of $(\g, \h)$.

\

\subsection{Weyl group} \ Let $\Wd$ be the Weyl group of $\gd$, and 
$\dot{\mathcal{Q}}^{\vee}:=\oplus_{i=1}^l\mathbb{Z}\alpha_i^{\vee}\subset\hd$
the reticulum of coroots of $\gd$. This means that $\Wd$ has a canonical action over
 $\dot{\mathcal{Q}}^{\vee}$  obtained from its action on $\hd$. The affine Weyl group is the semidirect product 
$\mathrm{Aff} \ \Wd:=\Wd\ltimes\dot{\mathcal{Q}}^{\vee}$, for $q\in\dot{\mathcal{Q}}^{\vee}$ and 
$\tau_q=(1,q)\in\mathrm{Aff} \ \Wd$.

If $\W$ is the Weyl group associated with the affine Kac Moody algebra $\g$, there exits a (unique) group isomorphism 
\begin{equation}
\beta:\W\rightarrow\mathrm{Aff} \ \Wd
\end{equation}
such that $s_0\mapsto\tau_{\theta^{\vee}}\gamma_{\theta}$ and
$s_i\mapsto\dot s_i$, for $1\leq i\leq n$, where $\{s_0,...,s_n\}$
are the simple reflections of $\W$, $\{\dot s_1,...,\dot s_n\}$ the
simple reflections of $\Wd$ and $\gamma_{\theta}\in\Wd$ is the 
corresponding one to the highest root $\theta$ ([Ku], Proposition 13.1.7).

\

\subsection{Kac Moody groups} \ As well as affine Kac Moody algebras can be presented as central extensions of loop algebras, affine Kac Moody groups can be seen as central extensions of loop groups.

Let $\Gd$ be a connected simple complex algebraic group
with Lie  algebra $\dot{\mathfrak g}$.   The set $\Gd(\mathcal K)$ of  $\mathcal K$-rational points of
the algebraic group $\Gd$ is the loop group associated with $\Gd$.

By hypothesis $\Gd$ is a subgroup of $SL_N(\mathbb{C})$, for a convenient $N$. Let $I$ be the ideal of $\mathbb{C}[SL_N(\mathbb{C})]$ such that $\C[\Gd]=\mathbb{C}[SL_N(\mathbb{C})]/ I$. 
Then, under the canonical identification,
\begin{center}
    $\Gd(\mathcal K)=\{g=(g_{ij})\in SL_N(\K):P(g_{ij})=0, \text{ for all } P\in
I\}$
\end{center}

Extend $\Gd(\mathcal K)$ by adding $''\mathrm{Exp} \ d''$ as follows. Denote the group of automorphisms of $\C$-algebras of $\mathcal K$ by $\mathrm{Aut}(\mathcal K)$. Let
$\gamma:\C^*\rightarrow\mathrm{Aut} \ \mathcal K$
be the homomorphism of groups defined as $\gamma(z)(P(t))=P(zt)$,
for $z\in\C^*$ and $P\in\mathcal K$. It canonically induces a group  
homomorphism
$\gamma_{\Gd}:\mathbb{C}^*\rightarrow\mathrm{Aut} \ \Gd(\mathcal K)$
given by $\gamma_{\Gd}(z)(P_{ij}(t))=(P_{ij}(zt))$, where $\mathrm{Aut} \ \Gd(\mathcal K)$ is the group of automorphisms of  $\Gd(\mathcal K)$.

Define the semidirect product group as 
\begin{equation}\label{LGo}
    \LGo:=\mathbb{C}^*\ltimes \Gd(\mathcal K)
\end{equation}

For all $z\in\mathbb{C}^*$, denote by $d_z$ the corresponding element
$(z,1)\in\LGo$.

The \emph{Kac Moody group} $\G$ associated with the affine Kac Moody algebra $\g$ is the central extension of the group $\LGo$.

\

\subsection{Adjoint representation} The adjoint representation of  $\Gd$ on $\gd$ extends to 
a representation $\mathrm{Ad}_{\mathcal K}$ of $\Gd(\mathcal K)$ on
$\mathcal K\otimes_{\C}\gd$. By the election of the inclusion of algebraic groups 
$\Gd\hookrightarrow\mathrm{SL}_N(\C)$, we have that
\begin{equation}\label{ad}
    \mathrm{Ad}_{\mathcal K} g(x)=gXg^{-1}
\end{equation}
for $g\in\Gd(\mathcal K)\subset\mathrm{SL}_N(\mathcal K)$ and
$X\in\mathcal K\otimes_{\C}\gd\subset\mathrm{M}_N(\mathcal K)$ (where
$\mathrm{M}_N(\mathcal K)$ is the space of  $N\times N$-matrix over $\mathcal K$).

In this case, the adjoint representation $\mathrm{Ad}$ of $\LGo$ in
$\hg$ is calculated as follows: for $g\in \Gd(\mathcal K)$,
$x\in \mathcal K\otimes_{\mathbb{C}}\gd$, $\lambda, \mu\in\mathbb{C}$ and
$z\in\mathbb{C}^*$
\begin{equation}\label{adjointLG}
\begin{split}
\Ad g (x+\lambda c+\mu d) := & \mathrm{Ad}_{\mathcal K} g (x)-\mu
t(\frac{dg}{dt})g^{-1}+ \\
   &  + \left(\lambda- \mathrm{res}\left\langle
g^{-1}\frac{dg}{dt},x-\frac{1}{2}\mu
tg^{-1}\frac{dg}{dt}\right\rangle_{t}\right)c+\mu d \\
  \Ad d_z (x+\lambda c+\mu d) := &\gamma_{\hg}(z)(x)+\lambda
  c+\mu d
  \end{split}
\end{equation}
where $\langle , \rangle_t$ is the  $\mathcal K$-bilineal form over
$\mathcal K\otimes_{\C}\gd$ that extends the invariant bilinear form 
normalized by: $\langle p\otimes x,q\otimes y\rangle=pq\langle x,y\rangle$ where $\langle,\rangle$ 
is the Killing form over $\gd$, $\mathrm{res}$ denotes the 
coefficient of $t^{-1}$,
$\gamma_{\gc}:\C^*\rightarrow\mathrm{Aut} \ \gc$ is the induced application
from  $\gamma$ (similar to the application $\gamma_{\Gd}$),
$g=(g_{ij})_{1\leq i,j\leq N}\in\mathrm{SL}_n(\mathcal K)$,
$\frac{dg}{dt}$ is
$(\frac{dg_{ij}}{dt})_{ij}\in\mathrm{M}_n(\mathcal K)$ and
$(\frac{dg}{dt})g^{-1}=g^{-1}\frac{dg}{dt}\in\mathcal K\otimes_{\C}\gd$.

\

\subsection{Subalgebra $\mathfrak{b}_w$} \ For all $w\in\W$, let
\begin{equation}\label{Deltaw}
\Delta_w=\{\alpha\in\Delta^+|w^{-1}\alpha\in\Delta^-\}=\Delta^+\cap
w\Delta^- .
\end{equation}
By  ([Ku], Lemma 1.3.14), if $w=s_{i_1}...s_{i_l}$ is a reduced expression we have that $\Delta_w=\{\alpha_{i_1},s_{i_1}\alpha_{i_2},...,s_{i_1}...s_{i_{l-1}}\alpha_{i_l}\}$.  Moreover, $\Delta_w$ is closed for the bracket, then $\bo_w:=\h\oplus\bigoplus_{\alpha\in\Delta_w}\g_{\alpha}$ is a finite dimensional subalgebra of $\g$.

\begin{defn}\label{pollo} Let $\mathfrak s$ be a Lie algebra, then $\mathfrak s$ is called
\begin{itemize}
  \item \emph{$\ad_{\mathfrak s}$-triangular}, if there exits a flag of ideals: $\mathfrak s_0=\{0\}\subset\mathfrak s_1\subset\mathfrak s_2\subset...$ such that
$\bigcup\mathfrak s_i=\mathfrak s$ and, for each $i\geq 0$,
$\mathrm{dim}(\mathfrak s_{i+1}/\mathfrak s_i)=1$
  \item \emph{$\ad_{\mathfrak s}$-locally diagonalizable}, resp. \emph{$\ad_{\g}$-finitly semisimple}, if $\mathfrak s$ is a direct sum of irreducible 1-dimensional $\mathfrak s$-submodules, resp. finitly-dimensional.
  \item \emph{$\ad_{\mathfrak s}$-locallly finite}, if all $x\in\mathfrak s$ is contained in an ideal of finite dimension of $\mathfrak s$.
\end{itemize}
\end{defn}

\begin{theorem}\label{co} Let $\mathfrak s\subset\g$ be a subalgebra, then the following conditions are equivalent,
\begin{enumerate}
\item $\mathfrak s$ is $\ad_{\g}$-triangular.
\item $\mathfrak s$ is a finitly dimensional Lie algebra  which is $\ad_{\g}$-locally finite.
\item There exits $g\in\G$ and $w\in\W$ such that $\Ad g (\mathfrak s)\subset\bo_w$.
\end{enumerate}
\end{theorem}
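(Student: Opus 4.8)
I would prove the equivalences around the cycle $(3)\Rightarrow(1)\Rightarrow(2)\Rightarrow(3)$. The first two implications are Lie-theoretic and follow once one knows that $\bo_w$ is a finite-dimensional solvable subalgebra that acts on $\g$ in a locally finite (indeed triangular) way. The \emph{decisive} step is $(2)\Rightarrow(3)$: this is the affine Kac-Moody counterpart of the classical fact that a solvable subalgebra of a semisimple Lie algebra is conjugate into a Borel subalgebra, and for it I would invoke the Bruhat and generalized Gaussian decompositions of $\G$ together with the conjugacy theory for $\ad_{\g}$-locally finite elements and subalgebras developed in [Ku].

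For $(3)\Rightarrow(1)$: since $\Delta_w\subseteq\Delta^+$ is finite and closed under addition, $\mathfrak n_w:=\bigoplus_{\alpha\in\Delta_w}\g_{\alpha}$ is a finite-dimensional nilpotent ideal of $\bo_w$ with abelian quotient $\bo_w/\mathfrak n_w\cong\h$, so $\bo_w$ is finite-dimensional and solvable. Moreover $\bo_w$ acts $\ad_{\g}$-triangularly on $\g$: the toral part acts diagonally through the root-space decomposition, and each element of $\mathfrak n_w$ is $\ad_{\g}$-locally nilpotent because the $\ad\mathfrak n_w$-submodule generated by any weight vector has only finitely many $\h$-weights — no nonzero element of $\mathbb{Z}_{\geq 0}\Delta_w$ whose $\hd^{*}$-component vanishes can occur, since applying $w^{-1}$ would exhibit it as a nonnegative combination of negative roots. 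Hence every weight vector lies in a finite-dimensional $\ad\bo_w$-submodule, on which the solvable algebra $\bo_w$ is triangularizable by Lie's theorem; patching these flags produces a complete $\ad\bo_w$-stable flag of $\g$, whose restriction to $\Ad g(\ms)\subseteq\bo_w$, transported back by $\Ad g^{-1}$, shows $\ms$ is $\ad_{\g}$-triangular. The same flag, used for $(1)\Rightarrow(2)$, realizes $\g$ as a union of finite-dimensional $\ad\ms$-submodules, and since $\ms$ acts on each one-dimensional quotient by a character, $[\ms,\ms]$ acts strictly lower-triangularly; a routine argument then shows $\ms$ is solvable and finite-dimensional, which is $(2)$.

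The \emph{crux} is $(2)\Rightarrow(3)$. Given $\ms$ finite-dimensional and $\ad_{\g}$-locally finite — in particular solvable — I would split off a maximal $\ad_{\g}$-diagonalizable subalgebra $\mathfrak t$ and, using the conjugacy of such subalgebras under $\G$, assume after conjugating that $\mathfrak t\subseteq\h$; then $\ms$ becomes $\ad\h$-stable, hence supported on a finite set $S$ of roots closed under addition, so $\ms\subseteq\h\oplus\bigoplus_{\alpha\in S}\g_{\alpha}$. It then remains to conjugate $S$ into some $\Delta_w$: using the Bruhat/Gaussian decomposition of $\G$ and the description $\W\cong\Wd\ltimes\dot{\mathcal{Q}}^{\vee}$ of the affine Weyl group, one moves $S$ first into $\Delta^+$ and then, exploiting that $S$ is finite and closed, inside a single $\Delta_w$, placing the resulting conjugate of $\ms$ in $\bo_w$; alternatively one can run a fixed-point argument for the pro-solvable subgroup of $\G$ integrating $\ms$ acting on the flag ind-variety $\G/\mathcal B$ and use that $\bo_w$ is self-normalizing. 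I expect the coordination in this last step to be the main obstacle: one must straighten the semisimple directions into $\h$ and the nilpotent directions into a common $\mathfrak n_w$ simultaneously, and it is precisely here that the finiteness and closedness of the sets $\Delta_w$ and the combinatorics of $\W$ are indispensable — the technical input I would cite from [Ku].
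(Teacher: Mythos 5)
The paper offers no argument for this theorem: its ``proof'' is the single line ``Ver ([Ku] 10.2.5)'', so the statement is imported wholesale from Kumar's book. Your outline is therefore not competing with an argument in the paper but with Kumar's, and it is broadly faithful to it. Your treatment of $(3)\Rightarrow(1)$ is sound, and you isolate the correct key point: no nonzero element of $\mathbb{Z}_{\geq 0}\Delta_w$ can be a positive multiple of $\delta$, since applying $w^{-1}$ would write it as a nonnegative combination of negative roots; this is exactly what tames the imaginary root directions and makes the $\ad\,\bo_w$-submodule generated by a weight vector finite dimensional. Two caveats. First, in $(1)\Rightarrow(2)$ you dismiss the finite-dimensionality of $\ms$ as ``routine''; it is not. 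A complete $\ad\,\ms$-stable flag exhibits $\ms$ as an increasing union of finite-dimensional subalgebras $\ms\cap V_i$ but does not by itself bound $\dim\ms$ (compare $\C[t]\otimes\dot{\mathfrak b}$, with $\dot{\mathfrak b}$ a Borel subalgebra of $\gd$: an infinite-dimensional solvable subalgebra of $\g$), and this point absorbs real work in [Ku]. Second, $(2)\Rightarrow(3)$, which you rightly identify as the crux, is in your write-up also only a citation of the conjugacy theory of [Ku]; so in the end your proposal, like the paper, rests on Kumar for the substantive content --- the difference is that you make visible where that content sits. Finally, note that the paper's own Definition of ``$\ad$-triangular'' and ``$\ad$-locally finite'' literally speaks of flags and ideals \emph{of $\ms$} exhausting $\ms$, whereas you (correctly, and in accordance with [Ku]) read these as conditions on flags of $\ad\,\ms$-stable subspaces \emph{of $\g$}; the discrepancy is a defect of the paper's transcription of Kumar's definitions, not of your argument.
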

\begin{proof} Ver ([Ku] 10.2.5).
\end{proof}

\begin{obs}\label{pez} Every element $X\in\g$, $\ad_{\g}$-locally nilpotent is 
$\ad_{\g}$-locally finite. Moreover, the subalgebra of $\g$ generated by such an element is also 
$\ad_{\g}$-locally finite. Then, by Theorem \ref{co},
we have that for each  locally nilpotent element in  $\g$,
there exist $g\in\G$ and $w\in\W$ such that $\Ad g (X)\in\bo_w$ for some $w\in\W$.
\end{obs}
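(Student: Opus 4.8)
The plan is to deduce the three assertions in the order stated and to let Theorem \ref{co} carry the essential structural content. The first two assertions are elementary facts about the single element $X$ and the subalgebra it generates; only at the last step do we invoke the conjugation result, via the implication $(2)\Rightarrow(3)$ of Theorem \ref{co}.

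First I would prove that an $\ad_\g$-locally nilpotent element is $\ad_\g$-locally finite, interpreting the latter as: the endomorphism $T:=\ad_\g X$ of $\g$ is locally finite, i.e.\ every vector lies in a finite-dimensional $T$-invariant subspace. By hypothesis, for each $y\in\g$ there is an integer $N=N(y)\geq 1$ with $T^{N}y=0$. The subspace $V_y:=\mathrm{span}_{\C}\{y,Ty,\dots,T^{N-1}y\}$ is then finite dimensional, contains $y$, and is stable under $T$. Since $y$ was arbitrary, $T=\ad_\g X$ is locally finite, which is precisely the claim.

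Next I would identify the subalgebra $\ms$ generated by $X$. Because $[X,X]=0$, iterated brackets produce nothing new, so $\ms=\C X$ is one dimensional; in particular it is abelian and finite dimensional. Its local finiteness under $\ad_\g$ follows at once from the previous paragraph: every element of $\ms$ has the form $\lambda X$, and $\ad_\g(\lambda X)=\lambda\,\ad_\g X$ is locally finite as soon as $\ad_\g X$ is. Hence $\ms$ is a finite-dimensional subalgebra of $\g$ that is $\ad_\g$-locally finite, so it satisfies condition $(2)$ of Theorem \ref{co}.

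Finally, applying the implication $(2)\Rightarrow(3)$ of Theorem \ref{co} to $\ms$ produces $g\in\G$ and $w\in\W$ with $\Ad g(\ms)\subset\bo_w$, and restricting to the generator gives $\Ad g(X)\in\bo_w$, as asserted. I expect no serious obstacle here: the delicate input, namely conjugating a finite-dimensional $\ad_\g$-locally finite subalgebra into one of the $\bo_w$, is already supplied by Theorem \ref{co} (quoted from \cite{Ku}), and the only real point to check is that the hypotheses of that theorem genuinely hold, that is, that a single locally nilpotent element generates nothing larger than the one-dimensional, locally finite subalgebra $\C X$.
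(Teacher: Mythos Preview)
Your proposal is correct and follows exactly the route the paper intends: the remark itself is not given a separate proof in the paper, and your argument simply spells out the three implicit steps (locally nilpotent $\Rightarrow$ locally finite for $\ad_\g X$; the generated subalgebra is $\C X$, hence finite dimensional and $\ad_\g$-locally finite; then invoke $(2)\Rightarrow(3)$ of Theorem \ref{co}). There is nothing to add or correct.
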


\

\subsection{Locally nilpotent elements} \ Let $X\in\g$ (resp. $X\in\hg$). We say that $X$
is \emph{locally nilpotent} on $\g$  (resp. on $\hg$), if
$\ad_{\g} X\in\End(\g)$ (resp.  $\ad_{\hg} X\in\End(\hg)$)  is
locally nilpotent.

\begin{prop} Let $X\in\hg$ $\ad_{\hg}$-locally nilpotent. Then,
$\Ad g( X)$ is $\ad_{\hg}$-locally nilpotent,  for all $g\in\G$.
\end{prop}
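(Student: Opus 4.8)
The plan is to reduce the statement to two facts: that for every $g\in\G$ the operator $\Ad g$ is an automorphism of the Lie algebra $\hg$, and that the property of an $\ad$-operator being locally nilpotent is invariant under conjugation by a Lie algebra automorphism. First I would record the elementary identity: if $\phi$ is any automorphism of $\hg$, then from $\phi[Y,Z]=[\phi Y,\phi Z]$ one gets $\ad_{\hg}(\phi Y)\circ\phi=\phi\circ\ad_{\hg}(Y)$, hence $\ad_{\hg}(\phi Y)=\phi\circ\ad_{\hg}(Y)\circ\phi^{-1}$ and, iterating, $\big(\ad_{\hg}(\phi Y)\big)^{N}=\phi\circ\big(\ad_{\hg} Y\big)^{N}\circ\phi^{-1}$ for every integer $N\geq 0$. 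Since $\Ad\colon\G\to\Aut(\hg)$ is a group homomorphism, $\Ad g^{-1}=(\Ad g)^{-1}$, so this applies with $\phi=\Ad g$.

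Next I would carry out the argument pointwise. Fix $g\in\G$ and an arbitrary $w\in\hg$, and set $v:=\Ad g^{-1}(w)\in\hg$. Because $X$ is $\ad_{\hg}$-locally nilpotent, there is $N=N(v)$ with $\big(\ad_{\hg} X\big)^{N}v=0$. Then, using the conjugation identity with $\phi=\Ad g$ and $Y=X$,
$$\big(\ad_{\hg}\Ad g(X)\big)^{N}w=\Ad g\Big(\big(\ad_{\hg} X\big)^{N}\big(\Ad g^{-1}(w)\big)\Big)=\Ad g(0)=0.$$
As $w\in\hg$ was arbitrary, $\ad_{\hg}\Ad g(X)\in\End(\hg)$ is locally nilpotent, i.e. $\Ad g(X)$ is $\ad_{\hg}$-locally nilpotent, which is the claim.

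The step I expect to be the only real obstacle is the input that $\Ad g\colon\hg\to\hg$ is genuinely a Lie algebra automorphism for every $g\in\G$. Since $\G$ is the central extension of $\LGo=\C^{*}\ltimes\Gd(\K)$ and the central torus acts trivially in the adjoint representation, it suffices to check this on the elements $d_{z}$ ($z\in\C^{*}$) and on $\Gd(\K)$ via the explicit formulas \eqref{adjointLG}: for $d_{z}$ it is immediate because $\gamma_{\hg}(z)$ is an automorphism of $\hg$ by construction, while for $g\in\Gd(\K)$ one must verify that the twisted conjugation on $\K\otimes_{\C}\gd$ together with its correction terms on $\C c\oplus\C d$ respects the bracket \eqref{corchete}. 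This is precisely part of the construction of the maximal Kac–Moody group and its adjoint representation, so I would simply invoke the corresponding statement in [Ku] rather than recompute it. Granting that, the conjugation argument above completes the proof, and in fact the same reasoning shows that $\Ad g$ carries the whole flag realizing a finite $\ad_{\hg}$-triangular structure, if one wants a structural rather than pointwise formulation.
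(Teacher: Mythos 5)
Your argument is correct and is essentially the paper's own proof: both establish the conjugation identity $\ad_{\hg}(\Ad g(X))=(\Ad g)\circ\ad_{\hg}(X)\circ(\Ad g)^{-1}$ from the fact that $\Ad g\in\Aut(\hg)$ and conclude that local nilpotency is preserved. Your additional care in spelling out the pointwise iteration and in flagging that $\Ad g\in\Aut(\hg)$ must be taken from the construction in [Ku] only makes explicit what the paper states without comment.
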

\begin{proof} Let $g\in\G$ and $X\in\hg$ be $\ad_{\hg}$-locally nilpotent. 
We have that $\Ad g\in\mathrm{Aut}(\hg)$, then for $Y\in\hg$
$$\begin{aligned}
\mathrm{ad}(\Ad g (X))(Y)&=[\Ad g(X),Y] \\
&=\Ad g([X,(\Ad g)^{-1}(Y)])  \nonumber  \\
&=(\Ad g)(\mathrm{ad}X)(\Ad g)^{-1}(Y)
\end{aligned}.$$
As $\ad X$ is locally nilpotent on $\hg$ and $\Ad g\in\Aut(\hg)$,
then $(\Ad g)(\mathrm{ad}X)(\Ad g)^{-1}$ is
locally nilpotent on $\hg$, which means that $\Ad g( X)$ also is.
\end{proof}

\

The previous proposition allows us to introduce the definition of
locally nilpotent orbits on affine Kac Moody algebras.

\begin{defn} Let $X\in\hg$. We say that the $\G$-\'orbit through $X$,
\begin{equation}
\mathfrak{O}_X=\Ad \G (X)=\Ad \LGo (X)
\end{equation}
is a \emph{locally nilpotent orbit on $\hg$} if each of its elements is
 $\ad_{\hg}$-locally nilpotent.
\end{defn}

Note that  locally nilpotent  orbits are seen in  $\hg$  and not in $\g$, because  $\Ad \G (\g)\subset\hg$. 
Denote by $\mathfrak O(\hg)$ the subset of orbits on
$\hg$, whose intersection with $\g$ is no empty.
Our interest is to characterize and classify the
orbits in $\mathfrak{O}(\hg)$. From now on, when we talk about a locally nilpotent orbit
on $\hg$, we mean an element of 
$\mathfrak O(\hg)$.

Until now, we have distinguished between the adjoint representation on 
 $\g$ and on $\hg$ by $\ad_{\g}$ and
$\ad_{\hg}$; from now on, we denote the adjoint on 
$\hg$ by $\ad=\ad_{\hg}$. We will say that an element in $\hg$ is locally nilpotent if it is $\ad_{\hg}$-locally nilpotent.

\section{Distinguished representatives}
In this section we will prove that every locally nilpotent orbit has an element in 
$\K\otimes_{\C}\nd^+\oplus\C c$, where $\nd^+=\bigoplus_{\alpha\in\dot{\Delta}^+}\gd_{\alpha}$ and $\dot{\Delta}^+$ 
are the positive roots of $\dot{\Delta}$. Moreover, we will show that every locally nilpotent element in $\g$ is nilpotent. 
\begin{lemma}\label{sup}
  For each locally nilpotent $X\in\g$,  there exists $g\in\G$ such that $\Ad g(X)\in\h\+\K \otimes_{\C}\nd^+$.
\end{lemma}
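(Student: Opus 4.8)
The plan is to combine Remark \ref{pez} (which pins $X$ into some finite-dimensional $\bo_w$ after conjugation) with a careful analysis of which positive root spaces can actually appear, and then use further conjugation by elements of $\G$ — in fact by elements coming from the loop group and from the torus — to kill the negative-loop and Cartan-direction contributions. So first I would apply Remark \ref{pez}: since $X$ is locally nilpotent it is $\ad_\g$-locally finite, hence by Theorem \ref{co} there is $g_1\in\G$ and $w\in\W$ with $\Ad g_1(X)\in\bo_w=\h\oplus\bigoplus_{\alpha\in\Delta_w}\g_\alpha$. Recall $\Delta_w\subset\Delta^+$ is finite and closed under the bracket. The point is that $\Delta^+=\{j\delta: j>0\}\cup\{j\delta+\beta: j>0,\ \beta\in\dot\Delta\}\cup\dot\Delta^+$, so a priori $\Ad g_1(X)$ could have a component in $\h$, in imaginary root spaces $\g_{j\delta}$, and in real root spaces $\g_{j\delta+\beta}$ with $\beta$ negative. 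I must show all of these except the $\h$-part and the "$\beta$ positive, $j\geq 0$" real part can be removed — and then absorb the $\h$-part appropriately.

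The key technical step is to understand the ``$\ad$-nilpotency constraint'' on $\bo_w$. Since $X' := \Ad g_1(X)$ is still locally nilpotent, $\ad X'$ acts nilpotently; in particular its semisimple part (the $\h$-component, roughly) must act nilpotently on $\g$, which forces the $\h$-component of $X'$ to be... well, not quite zero, but to pair trivially with every root in $\Delta_w$; more precisely, writing $X'=h'+\sum_{\alpha\in\Delta_w}x_\alpha$, nilpotency of $\ad X'$ restricted to $\bo_w$ together with the grading forces $[h', x_\alpha]=\alpha(h')x_\alpha$ to be ``nilpotently coupled'', and a standard argument (the semisimple part of $\ad X'$ must vanish on the span of the $x_\alpha$'s) shows that after possibly adjusting by an inner automorphism of $\bo_w$ one can assume $h'$ centralizes all the $x_\alpha$. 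Then I would invoke the structure of $\Delta_w$ for affine $\W$: using the isomorphism $\beta:\W\to\mathrm{Aff}\,\Wd$ and the explicit description $\Delta_w=\{\alpha_{i_1},\dots,s_{i_1}\cdots s_{i_{l-1}}\alpha_{i_l}\}$, one sees that any finite bracket-closed subset of $\Delta^+$ consisting of roots that can host a locally nilpotent element lies in $\bigcup_{j\geq 0}(j\delta+\dot\Delta^+)$ up to the imaginary roots, which are handled separately because $\g_{j\delta}\subset\h_{\text{loop}}$ behaves like a toral piece. Conjugating by a suitable $d_z$ (the ``$\mathrm{Exp}\,d$'' elements) rescales the powers of $t$ and, combined with loop-group conjugation, lets me push everything into $\h\oplus(\C[[t]]\otimes\dot\n^+)$, i.e. nonnegative powers of $t$ tensored with the fixed nilpotent subalgebra $\dot\n^+$, which is exactly $\h\oplus\K\otimes_\C\dot\n^+$ intersected correctly — and the statement as written allows the full $\K$, so we are fine.

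The main obstacle I anticipate is the bookkeeping around the imaginary root spaces $\g_{j\delta}$ and the central/derivation directions $\C c\oplus\C d$: $\bo_w$ contains all of $\h$, and $\h=\hd\oplus\C c\oplus\C d$, so a priori $X'$ has a $d$-component, but a nonzero $d$-component makes $\ad X'$ act on $\mathfrak L(\gd)$ with a nonzero semisimple (Euler) part, contradicting local nilpotency; hence the $d$-component vanishes, and similarly the genuinely semisimple part of the $\hd$-component must vanish — only a ``nilpotent-compatible'' toral element survives, and this is what gets conjugated into $\K\otimes\dot\n^+$ by a loop-group element, leaving at most a $c$-term, which however lies in $\h$. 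I would need to be careful that conjugation by $\G$ does not reintroduce negative powers of $t$ beyond what $\K$ allows — but since $\K=\C[[t]][t^{-1}]$ already permits finitely many negative powers and $\Ad\G(\g)\subset\hg$, this is automatically respected. Assembling these pieces gives $\Ad g(X)\in\h\oplus\K\otimes_\C\dot\n^+$ with $g=g_2 g_1$, $g_2$ a product of a loop-group element and a $d_z$, completing the proof.
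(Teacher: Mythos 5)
Your opening move coincides with the paper's: apply Remark \ref{pez} / Theorem \ref{co} to get $\Ad g_1(X)\in\bo_w=\h\oplus\bigoplus_{\alpha\in\Delta_w}\g_\alpha$. But from there the proposal misses the one idea that actually proves the lemma and replaces it with mechanisms that do not work. The crux in the paper is purely combinatorial: since $w^{-1}(j\delta)=j\delta$ remains positive, $\Delta_w$ contains no imaginary roots; since $\Delta_w$ is closed under addition of roots, the set $\Lambda=\{\alpha\in\dot\Delta:\alpha+n\delta\in\Delta_w \text{ for some } n\}$ of finite parts is a closed subset of $\dot\Delta$ with $\Lambda\cap(-\Lambda)=\emptyset$ (two roots $\alpha+m\delta,\ -\alpha+n\delta\in\Delta_w$ would force the imaginary root $(m+n)\delta$ into $\Delta_w$). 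Hence some $v\in\Wd$ maps $\Lambda$ into $\dot\Delta^+$, and since $s_i(\alpha+m\delta)=s_i(\alpha)+m\delta$, the lift $g_v=\tilde s_{i_1}\cdots\tilde s_{i_k}\in\G$ of $v$ carries every component into $\h\oplus\bigoplus_{\alpha\in\dot\Delta^+,\,n\ge 0}\g_{\alpha+n\delta}\subset\h\oplus\K\otimes_\C\dot\n^+$. Nothing in your write-up produces this $v$.

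Concretely, three of your steps are either false or off-target. (i) Your claim that ``any finite bracket-closed subset of $\Delta^+$ consisting of roots that can host a locally nilpotent element lies in $\bigcup_{j\geq 0}(j\delta+\dot\Delta^+)$ up to the imaginary roots'' is false: $\Delta_{s_0}=\{\alpha_0\}=\{\delta-\theta\}$ has finite part $-\theta\in\dot\Delta^-$, and $t\otimes e_0\in\g_{\alpha_0}$ is locally nilpotent. This is exactly the case the $\Wd$-conjugation is needed for. (ii) Conjugation by $d_z$ cannot help: $\Ad d_z$ preserves every root space, sending $t^j\otimes x_\beta$ to $z^jt^j\otimes x_\beta$, so it never moves a component out of $\g_{j\delta+\beta}$ with $\beta\in\dot\Delta^-$. (iii) The bulk of your argument — forcing the $d$-component and the semisimple part of the $\hd$-component to vanish via local nilpotency — is not needed for this lemma, whose conclusion allows the full $\h$; that analysis belongs to the subsequent Lemma \ref{der} and Theorem \ref{triang}. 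Finally, the imaginary root spaces $\g_{j\delta}=t^j\otimes\hd$ are not contained in $\h\oplus\K\otimes_\C\dot\n^+$, so they cannot be ``handled separately as a toral piece''; the correct observation is that they simply never occur in $\Delta_w$.
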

\begin{proof} Let $X\in\g$ be an $\ad_{\g}$-locally nilpotent element. As every $\ad_{\g}$-locally nilpotent element is $\ad_{\g}$-locally finite, by Theorem \ref{co}, there exist $g_1\in\G$ and $w\in\W$ such that $\Ad g_1 (X)=Y\in\bo_{w}$.

Consider $\Lambda=\{\alpha\in\dot\Delta:\alpha+n\delta\in\Delta_w$ for some $n\in\mathbb{Z}\}$. If $\alpha\in\Lambda$, then $-\alpha\notin\Lambda$; otherwise there would be $m,n\in\mathbb{Z}$ such that $\alpha+m\delta,-\alpha+n\delta\in\Delta_w$, which
implies that  $(m+n)\delta\in\Delta_w$ because $\Delta_w$ is closed, what is an absurd. In addition, as $\Lambda\subset\dot\Delta$ is closed, there exits $v\in 
\Wd$ such that $v\cdot\Lambda\subset\dot\Delta^+$. As $\Wd$ is generated by simple reflections $s_1,\cdots,s_n$ and $s_i(\alpha+m\delta)=s_i(\alpha)+m\delta$ for each $i=1,\cdots,n$, then  $v\cdot\Delta_w\subset\dot\Delta^+\+\mathbb{Z}_{\geq 0}\delta$. 

Now, for each $s_i\in\W$ exists $\tilde{s_i}\in\G$ such that $\Ad\tilde{s_i} (Y)=s_i (\ad)(Y)$. Moreover, $\Ad \tilde{s_i}(Y_{\alpha})\in\g_{s_i(\alpha)}$, for all $\alpha\in\Delta$ and $Y_{\alpha}\in\g_{\alpha}$. 
Then, if $v=s_{i_1}\cdots s_{i_k}$ is a reduced expression in $\W$,  $g_v=\tilde{s}_{i_1}\cdots \tilde{s}_{i_k}\in\G$ is such that $\Ad g_v(Y_{\alpha})\in\g_{v\alpha}$ for all $\alpha\in\Delta$ e $Y_{\alpha}\in\g_{\alpha}$. 

Therefore, if $g=g_v g_1$,  $\Ad g (X)\in\h\+\sum_{\lambda\in\Theta}\g_{\lambda}$, where $\Theta\subset\{\alpha+n\delta:\alpha\in\dot\Delta^+, n\in\mathbb{Z}_{\geq 0}\}$.
\end{proof}

\begin{lemma}\label{der} Let $Y=X+\lambda c+\mu d\in\hg$. If $Y$ is locally nilpotent, then $\mu=0$.
\end{lemma}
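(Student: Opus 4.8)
The plan is to study the operator $T:=\ad_{\hg}Y$. Since $\ad c=0$, we have $T=\ad X+\mu\,\ad d$, and $\ad d$ is exactly the grading derivation $D$ of the $\mathbb{Z}$-grading $\hg=\bigoplus_{n}\hg_{(n)}$ by powers of $t$ (with $\hg_{(0)}=\gd\oplus\C c\oplus\C d$), i.e.\ $D$ acts on $\hg_{(n)}$ as multiplication by $n$. Suppose, for a contradiction, that $\mu\neq 0$. Replacing $Y$ by $\mu^{-1}Y$, which does not affect local nilpotency since $(\mu^{-1}T)^{k}=\mu^{-k}T^{k}$, we may assume $\mu=1$, so $T=D+\ad X$. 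Write $X=\sum_{k\ge m}t^{k}\otimes x_{k}$ with $x_{m}\neq 0$; since each $\ad(t^{k}\otimes x_{k})$ shifts the grading by exactly $k$, the operator $T$ shifts it by at least $m$. (If $X=0$, then $T=D$ already has the nonzero eigenvalue $1$ on $t\otimes\gd$, contradicting local nilpotency; so assume $X\neq 0$.)

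I first dispose of the case $m\ge 0$. Then $T$ preserves every term of the decreasing, exhaustive, separated filtration $F_{p}:=\prod_{n\ge p}\hg_{(n)}$, and the operator it induces on $F_{p}/F_{p+1}\cong\hg_{(p)}$ is the degree-zero part $D+\ad(x_{0})$ (so $\ad(x_0)$ is absent when $m\ge 1$). For $p\neq 0$, identifying $\hg_{(p)}$ with $\gd$, this operator is $p\cdot\mathrm{id}+\ad_{\gd}(x_{0})$; since $0$ belongs to the spectrum of $\ad_{\gd}(x_{0})$ (e.g.\ $\ad_{\gd}(x_0)(x_0)=0$), the nonzero number $p$ belongs to the spectrum of $p\cdot\mathrm{id}+\ad_{\gd}(x_{0})$, so this operator is not nilpotent. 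Hence there is $y\in\gd$ with $(p\cdot\mathrm{id}+\ad_{\gd}(x_{0}))^{k}y\neq0$ for all $k$, and then $v:=t^{p}\otimes y\in F_{p}\setminus F_{p+1}$ satisfies $T^{k}v\notin F_{p+1}$, in particular $T^{k}v\neq 0$ for every $k$ — contradicting local nilpotency of $T$.

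To reduce the general case to this one, the plan is to conjugate $Y$ into non-negative $t$-degree. If $Y\in\g$, then Lemma \ref{sup} provides $g\in\G$ with $\Ad g(Y)\in\h\oplus\K\otimes_{\C}\nd^{+}$, and the argument proving it yields only weights $\alpha+n\delta$ with $\alpha\in\dot{\Delta}^{+}$ and $n\ge 0$, so the $\K\otimes\gd$-component of $\Ad g(Y)$ already lies in $\C[[t]]\otimes\gd$, i.e.\ has non-negative $t$-degree. By the formula \eqref{adjointLG}, the adjoint action of $\G$ leaves the $d$-component of every element unchanged, so $\Ad g(Y)$ still has $d$-component $\mu d$; and $\ad(\Ad g(Y))=(\Ad g)\,T\,(\Ad g)^{-1}$ is again locally nilpotent. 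Thus $\Ad g(Y)$ falls under the case $m\ge 0$, forcing $\mu=0$.

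The main obstacle is exactly this last reduction in full generality: for $Y\in\hg$ one needs the analogue of Lemma \ref{sup} — that a locally nilpotent $Y$ has a $\G$-conjugate of non-negative $t$-degree, equivalently that the associated operator $t\partial_{t}+\mu^{-1}\ad_{\gd}(X)$ carries no polar part. If one prefers a self-contained route, the case $m<0$ can be attacked directly by tracking lowest $t$-degrees: when $x_{m}$ is not nilpotent in $\gd$, take an $\ad_{\gd}(x_{m})$-eigenvector $z$ with eigenvalue $\rho\neq0$ and check that $T^{k}(1\otimes z)$ has lowest-degree term $\rho^{k}t^{km}\otimes z\neq0$ for all $k$; the sub-case in which $x_{m}$ is nilpotent then needs an extra step (e.g.\ an $\mathfrak{sl}_{2}$-triple through $x_{m}$, or the conjugation above), and this is where I expect the remaining difficulty to lie.
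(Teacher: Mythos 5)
Your argument is correct and follows essentially the same route as the paper's proof: conjugate via Lemma \ref{sup} so that the loop component has non-negative $t$-order, then show that the ``diagonal'' contribution $\mu^{k}$ of $\ad^{k}Y$ can never be cancelled. The paper implements this by testing on the single vector $t\otimes H$ with $H\in\hd$, using that the conjugated loop part lies in $\hd\oplus\C[[t]]\otimes\nd^{+}$ so that all terms other than $\mu^{k}\,t\otimes H$ stay inside $\C[[t]]\otimes\nd^{+}$; you instead pass to the graded quotients $F_{p}/F_{p+1}\cong t^{p}\otimes\gd$ and note that the induced operator $p\cdot\mathrm{id}+\ad_{\gd}(x_{0})$ has the nonzero eigenvalue $p$. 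Your version is marginally more robust, since it uses only non-negative order and not the upper-triangularity of the representative. The obstacle you flag for general $Y\in\hg$ is not a defect of your proof relative to the paper's: the paper's own proof likewise starts from $Y\in\g$ and then only remarks that the adjoint action preserves the $d$-component along the orbit, which suffices because by the convention of Section 2.6 only orbits meeting $\g$ are considered; the same two observations (which you already make) close your argument, so the admittedly incomplete self-contained treatment of the case $m<0$ is not needed.
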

\begin{proof} Let $Y=X+\lambda c+\mu d\in\g$ be a locally nilpotent element. By Lemma \ref{sup}, assume that   $X\in\h\+\mathbb{C}[[t]]\otimes_{\C}\dot\eta^+$. For $k\in\mathbb N$ and $H\in\hd$,
\begin{align*}
\mathrm{ad}^k(X+\mu d+\lambda c)(& t\otimes H ) = (\mathrm{ad}X+\mathrm{ad}\mu d)^k (t\otimes H)\\
   & =\sum_{i_1+...+i_s=k}\mathrm{ad}^{i_1}(X)\mathrm{ad}^{i_2}(\mu d)...\mathrm{ad}^{i_{s-1}}(X)\mathrm{ad}^{i_s}(\mu d)(t\otimes H).
\end{align*}
Then, $\mathrm{ad}^k(X+\mu d+\lambda c)(t\otimes H)=X'+\mu^k t\otimes H$, where $X'\in\mathbb{C}[[t]]
\otimes_{\C}\dot\eta^+$, because each term in the sum belongs to $\mathbb{C}[[t]]\otimes_{\C}\dot\eta^+$, except the term  $\mathrm{ad}^k(\mu d)(t\otimes H)=\mu^k t\otimes H$.  Therefore, $\mathrm{ad}(X+\lambda c+\mu d)^k(t\otimes H)\neq 0$ for all $k\in\mathbb N$. But this is possible only if $\mu=0$.

Consider now any other element in the orbit of $Y$, it is of form $\mathrm{Ad}_{\K} g (X)+\tau c$ or $\gamma_{\hg}(z)(X)+\lambda c$ for some $g\in\G$ and $z,\tau\in\mathbb C$. Then, the component of the derivation of any nilpotent element is null.
\end{proof}

\begin{theorem}\label{triang} Let $X\in\g$ be a locally nilpotent element. Then, there exists $g\in\G$ such that $\Ad g (X)\in\nm\+\C c$
\end{theorem}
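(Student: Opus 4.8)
The plan is to combine the two previous lemmas with a further normalization step that kills the Cartan component. By Lemma~\ref{sup}, after applying some $g_1\in\G$ we may assume $X\in\h\+\K\otimes_\C\nd^+$; write $X=H_0+\lambda c+\mu d+N$ with $H_0\in\hd$, $N\in\K\otimes_\C\nd^+$. By Lemma~\ref{der}, since $\Ad g_1(X)$ is again locally nilpotent (by the Proposition preceding the definition of locally nilpotent orbit), the derivation component vanishes, so $\mu=0$ and $X=H_0+\lambda c+N$. It remains to conjugate away $H_0\in\hd$. The point is that $\hd\+\K\otimes_\C\nd^+$ is (contained in) a subalgebra on which $\G$ acts by honest conjugation via the loop group $\Gd(\K)$, and the standard Borel-type argument should let a suitable element of the torus-and-unipotent part of $\Gd(\K)$ remove the semisimple part.

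First I would argue that $H_0=0$ already. The element $X=H_0+\lambda c+N$ is locally nilpotent; I claim local nilpotency forces $H_0=0$. Indeed, consider the projection $\pi\colon\hg\to\hd$ obtained from the decomposition $\hg=(\hd)\oplus(\K_{+}\otimes\hd\ \oplus\ \K\otimes\nd^\pm\ \oplus\ \C c\+\C d)$ relative to the triangular-type grading; one checks $\ad X$ preserves the relevant filtration and that the induced map on the associated graded piece isomorphic to $\hd$ is (up to the adjoint action of the finite-dimensional reductive part) just $\ad_{\gd}H_0$ acting on $\hd$, which is zero, but on the root spaces $\gd_\alpha\subset 1\otimes\gd$ it acts by $\alpha(H_0)$. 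If some $\alpha(H_0)\neq0$ then $\ad X$ has a nonzero eigenvalue on a finite-dimensional $\ad X$-stable quotient, contradicting local nilpotency (a locally nilpotent operator is nilpotent on every finite-dimensional stable subquotient). Hence $\alpha(H_0)=0$ for all $\alpha\in\dot\Delta$, so $H_0=0$ since $\hd$ is spanned by coroots and the roots separate it. Thus $X=\lambda c+N$ with $N\in\K\otimes_\C\nd^+$; I also need $N\in\C[[t]]\otimes\nd^+$, i.e. no negative powers of $t$, which is exactly the content of the target statement $\nm\+\C c=\C[[t]]\otimes\nd^+\+\C c$.

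For the integrality in $t$: after Lemma~\ref{sup} the roots $\lambda$ appearing lie in $\Theta\subset\{\alpha+k\delta:\alpha\in\dot\Delta^+,\ k\in\mathbb Z_{\geq0}\}$, so in fact the loop-algebra component already lies in $\C[[t]]\otimes\nd^+$ — no negative powers of $t$ occur, and adding a central term $\lambda c$ keeps us in $\nm\+\C c$. One has to be slightly careful that passing to the completion $\hg$ doesn't reintroduce negative powers: but the element $X$ started in $\g=\mathfrak L(\gd)\+\C c\+\C d$, and all the conjugating elements used ($g_1$ from Theorem~\ref{co}, the Weyl-group lifts $\tilde s_i$, and the torus element removing $H_0$) preserve $\g$ when applied to $X\in\g$, or at worst land in $\hg$ with the $t$-adic valuation bounded below by $0$ because of the $\Theta$-constraint. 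So the loop component of $\Ad g(X)$ is an upper-triangular (nilpotent) matrix over $\C[[t]]$, possibly plus a multiple of $c$, which is precisely $\nm\+\C c$.

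The main obstacle I anticipate is the middle step: rigorously showing $H_0=0$, i.e. that the semisimple Cartan component must vanish for a locally nilpotent element, and doing so cleanly within the filtration coming from $\bo_w$ rather than by an ad hoc computation. The subtlety is that $X$ need not be $\ad$-semisimple plus $\ad$-nilpotent in any a priori Jordan decomposition (we are in infinite dimensions), so one must instead exhibit a concrete finite-dimensional $\ad X$-stable subquotient of $\hg$ on which $\ad X$ acts with eigenvalue $\alpha(H_0)$ for each root $\alpha$ with $X$ having a component in $\gd_\alpha$-degree, then invoke that local nilpotency descends to such subquotients. Using $\bo_w$ from Remark~\ref{pez} — which is genuinely finite-dimensional and $\ad X$-stable after the conjugation of Lemma~\ref{sup} — makes this manageable: on $\bo_w$, $\ad X$ is a finite-dimensional nilpotent operator, its diagonal part in the root-space basis is $\big(\alpha(H_0)\big)_\alpha$, hence all $\alpha(H_0)=0$. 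Everything else is bookkeeping with the $\delta$-grading and the explicit form of $\Ad$ in~(\ref{adjointLG}).
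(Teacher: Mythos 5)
Your proposal is correct and follows essentially the same route as the paper: reduce via Lemmas~\ref{sup} and~\ref{der}, then kill the remaining component $H_0\in\hd$ by noting that local nilpotency is incompatible with the nonzero eigenvalue $\alpha_j(H_0)$ that $\ad X$ would have on the height-one graded piece. The paper just makes your filtration argument explicit by computing $\ad^k(Y)(E_j)=r^k E_j+X_k$ with $r=\alpha_j(H_0)$ and $X_k$ supported on roots of height at least two.
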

\begin{proof} Let $Y=H+X+\lambda c$ be a locally nilpotent element with $X\in\nm$, $H=\sum_{i=1}^n k_i \alpha_i^{\vee}$ and $\lambda\in\C$. If $H$ is non zero, then $k_j\neq 0$ for some $j\in\{1,\cdots,n\}$. Let's see that for all $k\in\mathbb N$,
$\ad^k(Y)(E_j)\neq 0$. 

For each $\alpha=\sum_{i=1}^n m_i\alpha_i \in\dot\Delta^+$ denote by $|\alpha|=\sum_{i=1}^n m_i$ and consider the sets 
$$\g_1=\bigoplus_{\substack{{\alpha+l\delta\in\Delta}\\{|\alpha|=1}}}\g_{\alpha+l\delta}, \ \ \ \ \ \g_2= \bigoplus_{\substack{{\alpha+l\delta\in\Delta}\\{|\alpha|\geq2}}}\g_{\alpha+l\delta} .$$
We have that $\ad^k(Y)(E_j)=r^k E_j+X_k,$ where $r=\sum_{i=1}^n k_i\alpha_j(\alpha_i^{\vee})\neq0$.
Moreover,  $r^k E_j\in\g_1$ and $X_k\in\g_2$ or $X_k=0$, which contradicts the fact that $Y$ is locally nilpotent. Hence, $H=0$ and the result of the theorem follows from Lemmas \ref{sup} and \ref{der}.
\end{proof} 

\begin{theorem} Every locally nilpotent element in $\g$ is nilpotent.
\end{theorem}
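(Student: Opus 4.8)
The plan is to reduce the general case to the structural normal form already established in Theorem~\ref{triang}, and then check nilpotency directly there. Let $X\in\g$ be locally nilpotent. By Theorem~\ref{triang} there is $g\in\G$ with $Y:=\Ad g(X)\in\nm\+\C c$, say $Y = N + \lambda c$ with $N\in\C[[t]]\otimes\dot\eta^+$ and $\lambda\in\C$. Since $\Ad g$ is an automorphism of $\hg$, it intertwines $\ad_{\hg} X$ with $\ad_{\hg} Y$; hence $X$ is nilpotent on $\hg$ if and only if $Y$ is. So it suffices to show that $\ad_{\hg}(N+\lambda c)$ is a nilpotent endomorphism of $\hg$.

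**Next I would** exploit the fact that $c$ is central, so $\ad_{\hg}(N+\lambda c) = \ad_{\hg} N$, and analyze $\ad_{\hg} N$ on the decomposition $\hg = \K\otimes_{\C}\gd \+ \C c \+ \C d$. The point is that $N$ lies in $\C[[t]]\otimes\dot\eta^+$, i.e. its loop part has only nonnegative powers of $t$ and takes values in the nilpotent subalgebra $\dot\eta^+$ of $\gd$. I would introduce the grading of $\gd$ by the height of roots (with $\hd$ in degree $0$ and $\dot\eta^+$ concentrated in strictly positive degrees, bounded above by the height $h$ of $\theta$), and observe that $\ad_{\dot\g}(\dot\eta^+)$ raises this degree, hence is nilpotent on $\gd$ with $(\ad_{\dot\g}\dot\eta^+)^{h+1}=0$ uniformly. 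Tensoring with $\C[[t]]$, the operator $\ad$ of $\C[[t]]\otimes\dot\eta^+$ on $\C[[t]]\otimes\gd$ is then nilpotent of the same uniform index: writing $N=\sum_{k\ge 0} t^k\otimes n_k$ with $n_k\in\dot\eta^+$, each application of $\ad N$ strictly raises the $\gd$-height, so $(\ad N)^{h+1}$ kills $\C[[t]]\otimes\gd$. Finally $[N,c]=0$ and $[N,d] = -t\,\tfrac{dN}{dt}\in\C[[t]]\otimes\dot\eta^+ \subset \K\otimes\gd$, so $(\ad N)^{h+2}(d)=0$ as well; and the central-extension correction term in \eqref{corchete} only adds a multiple of $c$, which is then killed by one further application. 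Hence $\ad_{\hg}(N+\lambda c)$ is nilpotent on all of $\hg$ with a bound like $h+3$, so $Y$, and therefore $X$, is nilpotent.

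**The main obstacle** I anticipate is bookkeeping the central and derivation directions carefully: $\ad N$ does not preserve the naive height filtration on the nose because of the cocycle term $m\,\delta_{m,-n}\langle x,y\rangle c$ and the $\mu\, n\, t^n\otimes y$ term coming from $d$. The clean way around this is to note that $N$ has no negative powers of $t$ while $\dot\eta^+$ is self-orthogonal under the Killing form (so $\langle \dot\eta^+,\dot\eta^+\rangle=0$), which makes the cocycle term vanish identically on $\C[[t]]\otimes\dot\eta^+$ paired with anything in $\C[[t]]\otimes\gd$; and the only place $d$ enters is the single vector $d$ itself, handled by the explicit computation above. Once one records that $\dot\eta^+$ is an $\ad$-nilpotent subalgebra of $\gd$ and that tensoring with $\C[[t]]$ and adjoining $\C c\+\C d$ preserves nilpotency with a uniform bound, the proof is short. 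An alternative, perhaps even cleaner, route is to invoke Remark~\ref{pez} directly: a locally nilpotent $X$ is $\ad_\g$-locally finite, hence by Theorem~\ref{co} conjugate into some finite-dimensional $\bo_w$; but on the finite-dimensional algebra $\bo_w$ the restriction of $\ad X$ is locally nilpotent and $\bo_w$ is finite-dimensional, so $\ad X|_{\bo_w}$ is genuinely nilpotent, and one then has to upgrade this to nilpotency on all of $\hg$ by the same height-filtration argument applied to $\bo_w \subset \h\+\K\otimes\dot\eta^+$ after using Theorem~\ref{triang}. I would present the first route as the main argument and mention the second as a remark.
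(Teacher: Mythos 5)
Your proposal is correct, but it follows a genuinely different route from the paper's. The paper does not pass through the normal form of Theorem~\ref{triang}: it writes the locally nilpotent element as $Y=\sum_{\alpha}t^{n_\alpha}\otimes Y_\alpha$, observes that the specialization $\overline Y=\sum_\alpha Y_\alpha$ is nilpotent in $\gd$, and then argues that every iterated bracket $[Y_{\alpha_{i_1}},[\dots[Y_{\alpha_{i_l}},Z_\beta]\dots]]$ appearing in $\ad^k(Y)(t^m\otimes Z_\beta)$ must vanish because $\ad^k(\overline Y)|_{\gd}=0$. You instead conjugate into $\nm\+\C c$ first and exploit the height grading of $\gd$, on which $\ad\,\dot\eta^+$ strictly raises the degree, to get a uniform nilpotency index; the central and derivation directions are handled exactly as you indicate. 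Your route buys two things: an explicit uniform bound, and it sidesteps a delicate point in the paper's argument --- the vanishing of $\ad^k(\overline Y)$ only gives the vanishing of the \emph{sum} of all iterated brackets, not of each individual term $W$, so the term-by-term conclusion really requires all the $Y_\alpha$ to lie in a common $\ad$-nilpotent subalgebra such as $\dot\eta^+$, which is precisely the reduction you make explicit. Two small corrections to your write-up: the uniform index on $\gd$ is $2h+1$ rather than $h+1$, since the height grading runs from $-h$ to $h$ (this does not affect the argument, only the bound); and the vanishing of the cocycle on $\C[[t]]\otimes\dot\eta^+$ paired with $\C[[t]]\otimes\gd$ comes from the residue (no negative powers of $t$ appear), not from the isotropy of $\dot\eta^+$ under the Killing form --- though your fallback observation that the cocycle only contributes central terms, killed at the next application, already covers the general action on $\K\otimes_{\C}\gd$.
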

\begin{proof} If $Y+\lambda c\in\g$ is locally nilpotent in $\g$, with $Y=\sum_{\alpha\in\Lambda}t^{n_{\alpha}}\otimes Y_{\alpha}$ for some $\Lambda\subset\dot\Delta\cup\{0\}$, then $\overline{Y}=\sum_{\substack{\alpha\in\Lambda}}Y_{\alpha}\in\gd$ is nilpotent in $\gd$. Therefore, there exists $k\in\mathbb{N}$ such that $\mathrm{ad}^k(\overline{Y})|_{\gd}\equiv0$. 

Let $m\in\mathbb N$, $\beta\in\dot\Delta\cup\{0\}$ and $Z_{\beta}\in\gd$, we have that 
\begin{equation}
[Y,t^m\otimes Z_{\beta}]=\sum_{\alpha\in\Lambda}t^{n_{\alpha}+m}\otimes[Y_{\alpha},Z_{\beta}]+n_{\alpha}\delta_{n_{\alpha},-m}\kappa(Y,Z_{\beta})c.
\end{equation}
Hence, each term of $\mathrm{ad}^k(Y)(t^m\otimes Z_{\beta})$ is in the center of $\g$ or is of form $t^s\otimes W$ for some $s\in\mathbb{Z}$ and $W=[Y_{\alpha_{i_1}},[...[Y_{\alpha_{i_l}},Z_{\beta}]...]\in\gd$ for some $\alpha_{i_1},...\alpha_{i_l}\in\Lambda$. The nilpotency of $\overline{Y}$ in $\gd$ implies that $W=0$, hence $\mathrm{ad}^{k+1}(Y)(t^m\otimes Z_{\beta})=0$. The same argumet works to show that 
$\mathrm{ad}^{k+1}(Y)(d)=0$. Therefore, $Y$ is nilpotent in $\g$. Moreover, it is easy to show that it is also in $\hg$.
\end{proof}

\section{Quasi-Jordan matrices}

If $A,B\in\mathfrak{gl}_n(\C)$ are conjugated by an element in $\mathrm{Gl}_n(\C)$, they are even conjugated by an element  in  $\mathrm{Sl}_n(\C)$. But, since $\K$ does not contain all the $n$th roots of its elements,  this fact can not be extended to $\mathrm{gl}_n(\K)$. 

It is known that $s=\sum_{i=-k}^{\infty}a_i t^i\in\K$ has an $n$th root in $\K$ if and only if the \textit{order} of $s$, given by $\mathcal{O}(s):=\mathrm{min}\{m  | \, a_m\neq0\}$,  is a multiple of $n$. Moreover, if $p,q\in\K$ are such that $\mathcal{O} (p)=m$ and $\mathcal{O}(q)=n$, for $m,n\in\mathbb{Z}$, then $\mathcal{O}(pq)=m+n$ and $\mathcal{O}(pq^{-1})=m-n$.

If $\overline{p}=(p_1,...,p_{n-1})\in\K^{n-1}$, a matrix of form 
\begin{equation}\label{jordan}
    J_{\overline{p},n}=\left(
  \begin{array}{ccccc}
    0 & p_1 & 0 & ... & 0 \\
   0  & 0 & p_2 & ... & 0 \\
   \vdots  &  &  & \ddots &  \\
     &  &  & & p_{n-1}   \\
    0 &  & ... &  & 0 \\
  \end{array}
\right)\in\mathrm{gl}_n(\K)
\end{equation} 
is called \emph{quasi-Jordan block}. If $X=D(J_{\overline{p}_{1,i_1}},...,J_{\overline{p}_{d,i_d}} )$ is a block diagonal matrix  with quasi-Jordan blocks $J_{\overline{p}_{k,i_k}}$, we say that $X$ is a \emph{ quasi-Jordan matrix}. Without loss of generality, we establish that $i_1\geq i_2\geq...\geq i_d$. Define  $\mathfrak{m}(J_{\overline{p},n})$, the \textit{multiplicity of} $J_{\overline{p},n}$, and $\mathfrak{n}_X$, the \textit{order of the multiplicities of} $X$, by
\begin{equation}\label{multjordan'}
\mathfrak{m}(J_{\overline{p},n})=p_1^{n-1}p_2^{n-2}...p_{n-1}  \qquad \qquad
\mathfrak{n}_X=\mathcal O\left(\prod_{k=1}^d \m(J_{\overline{q}_{k,j_k}})\right).
\end{equation}

If $X\in\slnk$ is nilpotent, then $X$ is $\Slk$-conjugated to a  quasi-Jordan matrix. To show that, we begin from the existence of a $T\in\mathrm{GL}_n(\K)$ such that $TXT^{-1}=J=\mathrm{diag}(J_{d_1},\cdots,J_{d_m})$ is a Jordan matrix.  Let $q=\det(T)$ and $\mathcal{O}(q)=k$. There are two situations: first, when $k$ is a multiple of $n$, so $q$ has a $n$th root in $\K$. Then $T'=\frac{T}{q^{\frac{1}{n}}}\in\Slk$ and
satisfies that $T'XT'^{-1}=J$. Second, when $k\equiv l(\mathrm{mod \ } n)$ with  $0<l<n$, we have that $\Or(qt^{-l})=k-l$. Then, $qt^{-l}$ has an $n$th root in $\K$. Consider $S=\mathrm{diag}(t^{-l},1,\cdots,1)$ and
$T'=\frac{T}{(qt^{-l})^{\frac{1}{n}}}$. Hence, $T'XT'^{-1}=J$, $\det(T)=t^{-l}$ and
$$(ST')X(ST')^{-1}=SJS^{-1}=\left(\begin{array}{cccccccc}
                                      0 & t^{-l} & 0 &\cdots  & 0 &  &  & \\
                                      0 & 0 & 1 &\cdots  & 0 &  &  &  \\
                                       \vdots  & \vdots &  & \ddots &    \vdots &  &  \\
                                       0  & 0 & 0 & & 1  &  &  \\
                                      0 & 0 & 0 & \cdots& 0  &  &  &  \\
                                       &  &  &  &  & J_{d_2} & &  \\
                                       &  &  &  & &  & \ddots &  \\
                                       &  &  & & &  &  & J_{d_m} \\
                                    \end{array}
                                  \right)$$
is a quasi-Jordan matrix such that $\det(ST')=1$.

Moreover, if $X\in\nk$ is conjugated by $T\in\mathrm{GL}_n(\K)$ to a quasi-Jordan matrix
$D(J_{\overline{p}_{1,d_1}},...,J_{\overline{p}_{m,d_m}} )$ for some $\overline{p}_{i,d_i}=(p_{i,1},...,p_{i,d_i-1})\in\K^{d_i-1}$, then we can explicitly give  the expression of $T$ in terms of $X$.  Let $T\in\Slk$ such that $TXT^{-1}=D(J_{\overline{p}_{1,d_1}},...,J_{\overline{p}_{m,d_m}} )$. Rewrite  $T$ as 
\begin{center} $\left(
     \begin{array}{c}
       T_1 \\
       \vdots \\
       T_i\\
       \vdots \\
       T_m \\
     \end{array}
   \right)$  whith $T_i=\left(
     \begin{array}{c}
       \overline t_{i,1} \\
       \overline t_{i,2} \\
       \vdots\\
       \overline t_{i,d_i -1} \\
       \overline t_{i,d_i} \\
     \end{array}
   \right)$ and $\overline t_{i,j}\in \K^n$
\end{center}
Then, we have that $T_i X=(O_1 \ J_{\overline {p}_{i,d_i}} \ O_2)T$, where $O_1$ and $O_2$ are null matrices of size  $d_i\times(d_1+...+d_{i-1})$ and $d_i\times d_{i+1}+...+d_m$ respectively. From this equality, it follows that 
\begin{align*}
\overline{t}_{i,1} X&=p_{i,1}\overline{t}_{i,2}  &\Rightarrow  \qquad &\overline{t}_{i,2}=p_{i,1}^{-1}\overline{t}_{i,1} X  \\
\overline{t}_{i,2} X&=p_{i,2}\overline{t}_{i,3} &\Rightarrow  \qquad &\overline{t}_{i,3}=p_{i,2}^{-1}\overline{t}_{i,2}X=p_{i,2}^{-1}p_{i,1}^{-1}\overline{t}_{i,1} X^2\\
&\vdots &  \qquad &\vdots  \\
\overline{t}_{i,d_i-1}X&=p_{i,d_i-1}\overline{t}_{i,d_i} &\Rightarrow \qquad &\overline{t}_{i,d_i}=p_{i,d_i-1}^{-1}\overline{t}_{i,d_i-1}X=\cdots=p_{i,d_i-1}^{-1}...p_{i,1}^{-1}\overline t_{i,1}X^{n-1}
\end{align*}

Hence, if we consider $\overline t_{i,1}=\overline t_{i}$ for each $i=1,...,m$, we obtain that 
\begin{equation}\label{Tform}
   T_i=\left(\begin{array}{c}
                                   \overline t_i \\
                                   p_{i,1}^{-1}\overline t_i X \\
                                   \vdots \\
                                   p_{i,1}^{-1}...p_{i,d_{i-2}}^{-1}\overline t_i X^{d_i-2} \\
                                   p_{i,1}^{-1}...p_{i,d_{i-1}}^{-1}\overline t_i X^{d_i-1} \\
                                 \end{array}
                               \right).
                               \end{equation}

\begin{lemma}\label{conjbloq} Let $X=J(p_1,...,p_{n-1})$ and $Y=J(q_1,...,q_{n-1})$ be in $\mathrm{sl}_n(\K)$. Then, $X,Y$ are $\mathrm{Sl}_n(\K)$-conjugated if and only if there exists $r\in\K$ such that
\begin{equation}\label{rfor}
r^n=\mathfrak{m}(X)\mathfrak{m}(Y)^{-1}
\end{equation}
\end{lemma}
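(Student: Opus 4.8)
The plan is to read a conjugating matrix directly off the explicit formula (\ref{Tform}), specialised to a single block, i.e.\ the case $m=1$, $d_1=n$. Throughout I assume that all the superdiagonal entries $p_i,q_i$ are nonzero, so that $X$ and $Y$ are regular nilpotent and $\mathfrak{m}(X),\mathfrak{m}(Y)\in\K^{\times}$; this is the only situation in which the right-hand side of (\ref{rfor}) makes sense, and the only one in which the lemma is applied. The point is that \emph{every} $T$ with $TXT^{-1}=Y$ is forced to have the shape of (\ref{Tform}), so both implications reduce to one closed-form determinant computation.

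For the implication $\Rightarrow$, let $T\in\mathrm{Sl}_n(\K)$ satisfy $TXT^{-1}=Y$ and write $T$ by its rows $\overline t_1,\dots,\overline t_n$. Since $Y=J(q_1,\dots,q_{n-1})$, the identity $TX=YT$ reads $\overline t_k X=q_k\,\overline t_{k+1}$ for $1\le k\le n-1$ and $\overline t_n X=0$; the first relations give $\overline t_{k+1}=(q_1\cdots q_k)^{-1}\,\overline t_1X^{k}$ (the single-block instance of (\ref{Tform})), and the last one is automatic because $X^{n}=0$. Pulling the scalars $(q_1\cdots q_k)^{-1}$ out of the successive rows gives $\det T=\mathfrak{m}(Y)^{-1}\det M$, where $M$ has rows $\overline t_1,\overline t_1X,\dots,\overline t_1X^{n-1}$. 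Because right multiplication by $X$ shifts a row one slot to the right, $\overline t_1X^{k}$ vanishes in its first $k$ entries and, writing $\overline t_1=(a_1,\dots,a_n)$, has $(k{+}1)$-st entry $a_1p_1\cdots p_k$; hence $M$ is upper triangular and $\det M=\prod_{k=0}^{n-1}a_1p_1\cdots p_k=a_1^{\,n}\,\mathfrak{m}(X)$. Therefore $1=\det T=a_1^{\,n}\,\mathfrak{m}(X)\mathfrak{m}(Y)^{-1}$, and since $a_1\neq 0$ (else $T$ is singular), $r:=a_1^{-1}\in\K$ satisfies $r^{n}=\mathfrak{m}(X)\mathfrak{m}(Y)^{-1}$.

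For $\Leftarrow$, take $r\in\K$ with $r^{n}=\mathfrak{m}(X)\mathfrak{m}(Y)^{-1}$ (so $r\neq 0$, the right-hand side being a unit), set $\overline t_1:=(r^{-1},0,\dots,0)$ and define $T$ by $\overline t_{k+1}:=(q_1\cdots q_k)^{-1}\,\overline t_1X^{k}$ for $k=0,\dots,n-1$. By the computation above $T$ is (in fact diagonal, hence) upper triangular with $(k{+}1)$-st diagonal entry $(q_1\cdots q_k)^{-1}r^{-1}p_1\cdots p_k\neq 0$, so $T$ is invertible, and $\det T=\mathfrak{m}(Y)^{-1}r^{-n}\mathfrak{m}(X)=r^{-n}r^{n}=1$, so $T\in\mathrm{Sl}_n(\K)$. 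It remains to verify $TX=YT$: for the first $n-1$ rows this is the defining recursion, and for the bottom row it follows from $X^{n}=0$ together with the vanishing of the last row of $Y$. Hence $TXT^{-1}=Y$.

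The computation is routine once (\ref{Tform}) is available; the only points needing care are the exponent bookkeeping in the products $q_1\cdots q_k$ and $p_1\cdots p_k$, which reassemble into $\mathfrak{m}(Y)^{-1}$ and $\mathfrak{m}(X)$ exactly by the count in (\ref{multjordan'}), and the standing hypothesis $p_i,q_i\neq 0$, without which the statement is false (if some $p_i=0$ then $\mathfrak{m}(X)=0$, so $r=0$ solves (\ref{rfor}), yet $X$ and $Y$ are not even $\mathrm{GL}_n(\K)$-conjugate, having different ranks). I expect this bookkeeping, rather than any conceptual difficulty, to be the only obstacle.
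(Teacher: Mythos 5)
Your proof is correct and follows essentially the same route as the paper: from $TX=YT$ you derive the forced row recursion $\overline t_{k+1}=(q_1\cdots q_k)^{-1}\,\overline t_1X^{k}$, observe that $T$ is then upper triangular with diagonal entries $t_1\prod_{j<i}p_jq_j^{-1}$, read off $\det T=t_1^{\,n}\,\mathfrak{m}(X)\mathfrak{m}(Y)^{-1}$, and use the corresponding diagonal matrix for the converse. (Your normalization $\overline t_1=(r^{-1},0,\dots,0)$ in the converse is in fact slightly cleaner than the paper's choice $a_1=1$, which literally yields $\det T=\mathfrak{m}(X)\mathfrak{m}(Y)^{-1}=r^{n}$ and implicitly needs the same rescaling by $r^{-1}$ to land in $\mathrm{Sl}_n(\K)$.)
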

\begin{proof} Let $T\in\mathrm{Sl}_n(\K)$ be such that $TXT^{-1}=Y$, then by (\ref{Tform}), on deduces that
\begin{center}
$T=\left(
   \begin{array}{c}
     \overline{t}\\
                              q_1^{-1} \overline{t}X \\
                                \vdots \\
                                q_1^{-1}...q_{n-2}^{-1}\overline{t}X^{n-2} \\
                               q_1^{-1}...q_{n-2}^{-1}q_{n-1}^{-1} \overline{t}X^{n-1} \\
   \end{array}
 \right)$ 
\end{center}
where $\overline{t}=(t_1,...,t_n)\in\K_{1,n}$ for some $t_1,...,t_n\in\K$. Si $T=(t_{ij})$. Hence,
\begin{center}
$t_{ii}=t_1\prod_{j=1}^{i-1}p_j q_j^{-1}$, $1\leq i\leq n$
\end{center}
Since  $\det  T=1$ and $T$ is upper triangular, 
$$\begin{aligned}
\det  T&=\prod t_{ii}\\
&=t_1^np_1^{n-1}p_2^{n-2}...p_{n-1}q_1^{-(n-1)}q_2^{-(n-2)}...q_{n-1}^{-1}\\
&=t_1^n\mathfrak{m}(X)\mathfrak{m}(Y)^{-1}
\end{aligned}$$
Then, considering $r=t_1^{-1}$ we obtain the expected result.

Reciprocally, considering $a_1=1$ and $a_{i+1}=\prod_{k=1}^ip_k q_k^{-1}$ for $i=1,...,n-1$, then the diagonal matrix  $T=\mathrm{diag}(a_1,\cdots,a_n)$ satisfies $TXT^{-1}=Y$. Moreover, (\ref{rfor}) implies that $\mathrm{det \ }T=1$.
\end{proof}

\begin{cor} \label{obsconjbloq} Let $X=J_{(p_1,...,p_{n-1})}$ and $Y=J_{(q_1,...,q_{n-1})}$ be in $\mathrm{sl}_n(\K)$. Then $X,Y$ are $\mathrm{Sl}_n(\K)$-conjugated if and only if
$\Or(\mathfrak{m}(X))-\Or(\mathfrak{m}(Y))$ is a multiple of $n$.
\end{cor}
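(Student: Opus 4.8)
The plan is to deduce this corollary directly from Lemma~\ref{conjbloq} by translating the condition ``there exists $r\in\K$ with $r^n=\mathfrak m(X)\mathfrak m(Y)^{-1}$'' into a statement about orders. First I would recall the two facts about orders collected in the preliminary discussion of this section: that an element $s\in\K$ admits an $n$th root in $\K$ precisely when $\Or(s)$ is a multiple of $n$, and that orders are additive under multiplication and subtractive under inversion, i.e. $\Or(pq)=\Or(p)+\Or(q)$ and $\Or(pq^{-1})=\Or(p)-\Or(q)$.

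With these in hand the argument is short. By Lemma~\ref{conjbloq}, $X$ and $Y$ are $\mathrm{Sl}_n(\K)$-conjugated if and only if the element $\mathfrak m(X)\mathfrak m(Y)^{-1}\in\K$ has an $n$th root in $\K$. Since $\mathfrak m(X)=p_1^{n-1}\cdots p_{n-1}$ and $\mathfrak m(Y)=q_1^{n-1}\cdots q_{n-1}$ are nonzero elements of $\K$ (the $p_i,q_i$ must be nonzero units in order for $X,Y$ to be nilpotent of the full Jordan type $n$), their quotient is a well-defined nonzero element of $\K$, and by the root criterion it has an $n$th root in $\K$ if and only if $\Or\!\left(\mathfrak m(X)\mathfrak m(Y)^{-1}\right)$ is a multiple of $n$. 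By the subtractivity of the order under inversion, $\Or\!\left(\mathfrak m(X)\mathfrak m(Y)^{-1}\right)=\Or(\mathfrak m(X))-\Or(\mathfrak m(Y))$, which gives exactly the stated condition.

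I do not anticipate a genuine obstacle here; the corollary is essentially a reformulation of the lemma. The only point that needs a word of care is the implicit hypothesis that the $p_i$ and $q_i$ are nonzero (equivalently, that $X$ and $Y$ are genuine quasi-Jordan blocks of size $n$, i.e. nilpotent of rank $n-1$): if some $p_i$ vanished, then $\mathfrak m(X)=0$ and neither $\mathfrak m(X)\mathfrak m(Y)^{-1}$ nor its order would make sense, and in fact $X$ would then fail to be conjugate to $Y$ unless $Y$ were degenerate in the same pattern. Assuming, as the notation $J_{(p_1,\dots,p_{n-1})}$ with all $p_i\in\K$ tacitly does throughout this section, that all entries are nonzero, the equivalence ``$\exists\,r\in\K:\ r^n=\mathfrak m(X)\mathfrak m(Y)^{-1}$'' $\iff$ ``$n\mid \Or(\mathfrak m(X))-\Or(\mathfrak m(Y))$'' is immediate, and combining it with Lemma~\ref{conjbloq} completes the proof.
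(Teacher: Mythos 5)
Your proof is correct and follows exactly the route the paper intends: the corollary is stated without proof as an immediate consequence of Lemma \ref{conjbloq} combined with the $n$th-root criterion ($s\in\K$ has an $n$th root iff $n\mid\Or(s)$) and the additivity of orders recalled at the start of the section. Your added remark that the $p_i,q_i$ must be nonzero units of $\K$ for the blocks to be genuine size-$n$ quasi-Jordan blocks is a reasonable and correct clarification of a hypothesis the paper leaves tacit.
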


Now, given $X=D(J_{\overline{p}_{1,i_1}},...,J_{\overline{p}_{d,i_d}} )$ and  $Y=D(J_{\overline{q}_{1,j_1}},...,J_{\overline{q}_{l,j_l}} )$,
$\mathrm{Sl}_n(\K)$-conjugated quasi-Jordan matrices, we obtain that
\begin{enumerate}
 \item $d=l$; that is, they have the same number of blocks, 

 \item $i_k=j_k$ for every $k=1,...,d$; that is, blocks have the same size; in other words, 
 $X,Y$ have the same associated partition.
\end{enumerate}
In fact, $X$ and $Y$ are $\mathrm{Gl}_n(\K)$-conjugated to Jordan matrices 
with blocks of size $i_1,...,i_d$ and
$j_1,...,j_l$ respectively. On the other hand, since $X$ and $Y$ are
$\mathrm{Sl}_n(\K)$-conjugated, they also are
$Gl_n(\K)$-conjugated. So, $X$ and $Y$ are $\mathrm{Gl}_n(\K)$-conjugated  to the same Jordan matrix; which means that
 $d=l$ and $i_k=j_k$ for all $k=1,...,d$.

\begin{theorem}\label{Teo''} Let $X=D(J_{\overline{p}_{1,i_1}},...,J_{\overline{p}_{d,i_d}} )$ and  $Y=D(J_{\overline{q}_{1,j_1}},...,J_{\overline{q}_{l,j_l}} )$ be quasi-Jordan matrices. If $X$ and $Y$ satisfy

 1. $d=l$; that is, they have the same number of blocks, 

 2. $i_k=j_k$ for every $k=1,...,d$; that is, blocks have the same size, and

 3. $\mathcal{O}\left(\prod_{k=1}^d\frac{\m(J_{\overline{q}_{k,j_k}})}{\m(J_{\overline{p}_{k,i_k}})}\right)=\mathfrak n_Y-\mathfrak n_X$, is a multiple of $i_d$, 
then, they are $\mathrm{Sl}_n(\K)$-conjugated.
\end{theorem}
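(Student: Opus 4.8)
The plan is to produce an explicit $T\in\mathrm{Sl}_n(\K)$ with $TXT^{-1}=Y$ in the form $T=ST'$, where $T'\in\mathrm{Gl}_n(\K)$ conjugates $X$ to $Y$ block by block, and $S\in\mathrm{Gl}_n(\K)$ is a determinant-correcting factor that centralizes $Y$, so that $TXT^{-1}=S(T'XT'^{-1})S^{-1}=SYS^{-1}=Y$ while $\det T=\det S\cdot\det T'=1$.

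First I would carry out the blockwise conjugation. By hypotheses 1 and 2 we may write $Y=D(J_{\overline{q}_{1,i_1}},\dots,J_{\overline{q}_{d,i_d}})$, so the $k$-th blocks of $X$ and $Y$ both have size $i_k$. Since the scalars $p_{k,j},q_{k,j}$ occurring in a quasi-Jordan block are units of $\K$, the reciprocal construction from the proof of Lemma~\ref{conjbloq}, performed in size $i_k$, gives the diagonal matrix $T_k=\mathrm{diag}(a^{(k)}_1,\dots,a^{(k)}_{i_k})$ with $a^{(k)}_1=1$ and $a^{(k)}_{j+1}=\prod_{l=1}^{j}p_{k,l}q_{k,l}^{-1}$, which satisfies $T_kJ_{\overline{p}_{k,i_k}}T_k^{-1}=J_{\overline{q}_{k,i_k}}$ and $\det T_k=\m(J_{\overline{p}_{k,i_k}})\,\m(J_{\overline{q}_{k,i_k}})^{-1}$. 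Taking $T'=D(T_1,\dots,T_d)$ then gives $T'XT'^{-1}=Y$ and, since $T'$ is block diagonal,
\[
\det T'=\prod_{k=1}^{d}\m(J_{\overline{p}_{k,i_k}})\,\m(J_{\overline{q}_{k,i_k}})^{-1}.
\]

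Next I would repair the determinant. Recalling that $\mathfrak n_X=\Or\left(\prod_{k}\m(J_{\overline{p}_{k,i_k}})\right)$ and $\mathfrak n_Y=\Or\left(\prod_{k}\m(J_{\overline{q}_{k,i_k}})\right)$, the previous display gives $\Or(\det T')=\mathfrak n_X-\mathfrak n_Y=-(\mathfrak n_Y-\mathfrak n_X)$, which by hypothesis 3 is a multiple of $i_d$; hence $(\det T')^{-1}$ has order divisible by $i_d$ and so admits an $i_d$-th root $c\in\K$. Put $S=D(I_{i_1},\dots,I_{i_{d-1}},cI_{i_d})$. Then $\det S=c^{i_d}=(\det T')^{-1}$, and $S$ centralizes $Y$: it is block diagonal with the same block pattern as $Y$, equal to the identity on the first $d-1$ blocks and to the scalar $cI_{i_d}$ on the last block, which commutes with $J_{\overline{q}_{d,i_d}}$. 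Therefore $T:=ST'$ lies in $\mathrm{Sl}_n(\K)$ and satisfies $TXT^{-1}=Y$, which proves the theorem.

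The main obstacle — and the reason hypothesis 3 is phrased exactly as it is — is the same phenomenon already isolated for a single block in Lemma~\ref{conjbloq} and Corollary~\ref{obsconjbloq}: the natural blockwise conjugator $T'$ always lands in $\mathrm{Gl}_n(\K)$ but generally not in $\mathrm{Sl}_n(\K)$, and $\K$ does not contain arbitrary roots of its elements. Hypothesis 3 says precisely that the determinant defect $(\det T')^{-1}$ has order divisible by $i_d$, which is exactly what is needed to absorb it into a scalar on the \emph{smallest} block — the one place where a scalar correction is ``free'', because $cI_{i_d}$ automatically commutes with the last quasi-Jordan block while placing identities on the remaining blocks leaves the blockwise conjugation untouched. (If some $p_{k,j}$ or $q_{k,j}$ vanished, that block would not be $\mathrm{Gl}$-conjugate to a single Jordan block and the orders in hypothesis 3 would be undefined, so the standing assumption that these scalars are units is needed throughout.)
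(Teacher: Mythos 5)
Your proof is correct and follows essentially the same route as the paper: an explicit block-diagonal conjugator built from the reciprocal direction of Lemma~\ref{conjbloq}, the determinant computed as $\prod_k \m(J_{\overline{p}_{k,i_k}})\m(J_{\overline{q}_{k,i_k}})^{-1}$, and hypothesis~3 used to extract an $i_d$-th root that is absorbed as a scalar on the last block. The only cosmetic difference is that you factor this scalar correction out as a separate centralizing matrix $S$, whereas the paper keeps it as the free parameter $t_d$ inside the conjugator itself.
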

\begin{proof} Given (1) and (2), the normal Jordan form ensures that $X$ e $Y$ are $\mathrm{GL}_n(\K)$-conjugated. Consider $T\in\mathrm{GL}_n(\K)$ as in (\ref{Tform}). Also assume, without loss of generality, that for each vector $\overline{t}_k$ the entry $i_1+\cdots+i_k+1$ is $t_k$  and zero the other ones.
Therefore, $T$ results to be a diagonal block matrix $T=D(T_1,\cdots,T_d)$,  where
$$T_k=\left(
        \begin{array}{ccccc}
          t_k & 0 & 0 & \cdots & 0 \\
          0 & \frac{t_k p_{k,1}}{q_{k,1}} & 0 & \cdots & 0 \\
          0 & 0 & \frac{t_k p_{k,1}p_{k,2}}{q_{k,1}q_{k,2}} &  &  \\
          \vdots &  &  & \ddots &  \\
          0 &  &  &  & \frac{\prod_{j=1}^{i_k-1}p_{k,j}}{\prod_{j=1}^{i_k-1}q_{k,j}} \\
        \end{array}
      \right)$$
for all $k=1,\cdots,d$.  It is clear that such a $T$ satisfy $TXT^{-1}=Y$ and
 $$\det \ T=\prod_{k=1}^d
      t_k^{i_k}\m(J_{\overline{p}_{k,i_k}})\m(J_{\overline{q}_{k,i_k}})^{-1}.$$
 So, if we choose in particular $t_1=\cdots=t_{d-1}=1$ and $t_d$ equal to the $i_d$th root of $\prod_{k=1}^d\frac{\m(J_{\overline{q}_{k,i_k}})}{\m(J_{\overline{p}_{k,i_k}})}$, whose existence is given by (3), then $\det \ T=1$. Therefore, $T\in\Slk$.
\end{proof}

\section{Nilpotent orbits in $\s$}
Consider the group 
$$\LSo:=\C^*\ltimes\LS=\C^*\ltimes Sl_n(\K)$$
and the complition la algebra of $\g=\s$,
$$\hg=\sgor=\slnk\+\C c\+\C d$$

\begin{theorem}\label{nilp4} Let $X\in\s$ be a nilpotent element. Then there exists $g\in\LSo$ such that
$$\mathrm{Ad} \ g(X)=D(J_{\overline{p}_{1,i_1}},...,J_{\overline{p}_{d,i_d}})+\lambda c$$
for some $d\in\mathbb{N}$, $\overline{p}_{k,i_k}\in\K^{i_k}$ and
$\lambda\in\C$.
\end{theorem}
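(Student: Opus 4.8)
The plan is to reach the stated normal form by two successive conjugations: first bring $X$ into $\nm\oplus\C c$ by the triangularization results of Section 3, and then bring its $\mathfrak{sl}_n$-component to quasi-Jordan shape by the normal-form analysis of Section 4, tracking the central and derivation coordinates throughout by means of the explicit formula (\ref{adjointLG}) for $\Ad$.

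By the definition of the orbit, $\Ad\G(X)=\Ad\LGo(X)$, and for $\g=\s$ one has $\LGo=\LSo$; hence it suffices to exhibit $g\in\LSo$. A nilpotent $X$ is in particular $\ad_\g$-locally nilpotent, so Theorem \ref{triang} produces $g_1\in\LSo$ with $\Ad g_1(X)=N+\lambda c$ for some $N\in\nm$ and $\lambda\in\C$. Since $\gd=\sln$, the subspace $\nd^+=\bigoplus_{\alpha\in\dot\Delta^+}\gd_\alpha$ is exactly the space of strictly upper triangular matrices, so $N$ is a strictly upper triangular $n\times n$ matrix over $\C[[t]]$; in particular $N^n=0$, so $N$ is a nilpotent element of $\mathfrak{sl}_n(\K)$.

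I would then invoke the construction recalled right before Lemma \ref{conjbloq}: a nilpotent matrix of $\mathfrak{sl}_n(\K)$ is conjugated to a Jordan matrix by some $T_0\in\mathrm{GL}_n(\K)$, and the determinant is then corrected — dividing $T_0$ by an $n$th root of $\det T_0$ when $\mathcal O(\det T_0)$ is divisible by $n$, and otherwise first precomposing with $\mathrm{diag}(t^{-l},1,\dots,1)$, which turns one Jordan block into a quasi-Jordan block, and then dividing — so as to obtain $T\in\mathrm{Sl}_n(\K)\subset\LSo$ with $TNT^{-1}=D(J_{\overline{p}_{1,i_1}},\dots,J_{\overline{p}_{d,i_d}})$ a quasi-Jordan matrix. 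Because $\Ad g_1(X)$ has vanishing $d$-component (by Lemma \ref{der} the derivation coordinate of a nilpotent element is zero, and $\Ad$ preserves that coordinate), formula (\ref{adjointLG}) gives
$$\Ad T(N+\lambda c)=TNT^{-1}+\Bigl(\lambda-\mathrm{res}\bigl\langle T^{-1}\tfrac{dT}{dt},\,N\bigr\rangle_t\Bigr)c=D(J_{\overline{p}_{1,i_1}},\dots,J_{\overline{p}_{d,i_d}})+\lambda'c$$
with $\lambda'\in\C$; thus $g:=Tg_1\in\LSo$ is the element sought.

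The one genuine obstacle is exactly what Section 4 was built to handle: diagonalizing to Jordan form is intrinsically a $\mathrm{GL}_n(\K)$-move and not an $\mathrm{Sl}_n(\K)$-one, because $\K$ does not contain all $n$th roots of its elements; the determinant correction, at the price of replacing an honest Jordan matrix by a quasi-Jordan one, is what keeps the conjugating element in $\mathrm{Sl}_n(\K)$ and hence in $\LSo$. The remaining points are routine: that the first reduction can already be carried out inside $\LSo$ rather than in the full Kac-Moody group, and that conjugating by $T\in\mathrm{Sl}_n(\K)$ changes only the $c$-coordinate according to (\ref{adjointLG}), leaving the loop-algebra part conjugated and the $d$-coordinate zero.
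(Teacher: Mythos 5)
Your proof is correct and follows essentially the same route as the paper: first apply Theorem \ref{triang} to land in $\nm\oplus\C c$, then conjugate the strictly upper triangular loop-algebra component to a quasi-Jordan matrix by an element of $\Slk$ using the determinant-correction construction preceding Lemma \ref{conjbloq}, and read off from formula (\ref{adjointLG}) that only the $c$-coordinate shifts. Your write-up is in fact slightly more careful than the paper's (which has a small typo, conjugating $X$ instead of $Y$ in the second step) in justifying why the triangularized component is nilpotent over $\K$ and why the whole reduction stays inside $\LSo$.
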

\begin{proof} If $X\in\s$ is nilpotent, by \ref{triang} there exists $g_1\in\LSo$ such that
$\mathrm{Ad} \ g_1 (X)=Y+\lambda_1 c$, where $Y\in\nm$ and
$\lambda_1\in\C$. On the other hand, there exists
$g_2\in\Slk$ such that $\mathrm{Ad}_{\K} \ g_2 (X)=g_2 X
g_2^{-1}=D(J_{\overline{p}_1,d_1},...,J_{\overline{p}_m,d_m})$, for some
 $\overline{p}_{k,i_k}\in\K^{i_k}$ and $d\in\mathbb N$, as was mentioned in the last section. Then,
$$\mathrm{Ad}  \ g_2 g_1 (X)=\mathrm{Ad} \ g_2 (Y+\lambda_1 c
)=D(J_{\overline{p}_1,d_1},...,J_{\overline{p}_m,d_m})+(\lambda_1-\mathrm{res}\langle
g_2^{-1}\frac{dg_2}{dt},Y\rangle _t)c$$ 
Hence, taking $g=g_2 g_1$ and
$\lambda=\lambda_1-\mathrm{res}\langle
g_2^{-1}\frac{dg_2}{dt},Y\rangle _t$, we obtain the expected result.
\end{proof}

\

 Until now, we have proved that every nilpotent element in $\s$ 
 is $\LSo$-conjugated to an element of the form $D+\lambda c$, where $D$ quasi-Jordan block. 
 So, we wonder if  conjugation classes under $\Slk$ and under $\LSo$ coincide.

\begin{theorem} Let $X\in\sgor$ be a nilpotent element. Then, the nilpotent orbits of $X$ in $\sgor$ under the action of $\Slk$ and of $\LSo$ are equal, that is
$$\Ad\Slk( X)=\Ad\LSo( X)$$
\end{theorem}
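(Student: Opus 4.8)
The inclusion $\Ad\Slk(X)\subseteq\Ad\LSo(X)$ is immediate, since $\Slk$ is (identified with) a subgroup of $\LSo=\C^*\ltimes\Slk$. The content is the reverse inclusion, and for this it suffices to show that conjugation by an element $d_z=(z,1)\in\LSo$ sends $X$ into its $\Slk$-orbit. Writing $X=Y+\mu c$ with $Y\in\slnk$ and $\mu\in\C$ (the $d$-component is zero by Lemma~\ref{der}, and the problem is invariant under the reduction to such representatives), formula (\ref{adjointLG}) gives $\Ad d_z(X)=\gamma_{\hg}(z)(Y)+\mu c$, where $\gamma_{\hg}(z)$ rescales $t\mapsto zt$. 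So the plan is: first reduce, using Theorem~\ref{nilp4}, to the case where $Y=D(J_{\overline p_{1,i_1}},\dots,J_{\overline p_{d,i_d}})$ is a quasi-Jordan matrix; then exhibit, for each $z\in\C^*$, an explicit element of $\Slk$ conjugating $\gamma_{\hg}(z)(Y)+\mu c$ back to $Y+\mu c$.

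The key observation is that $\gamma_{\hg}(z)$ preserves the shape of a quasi-Jordan matrix: it simply replaces each entry $p_{k,j}(t)$ by $p_{k,j}(zt)$, so $\gamma_{\hg}(z)(Y)$ is again a quasi-Jordan matrix with the same block sizes $i_1\geq\cdots\geq i_d$. Moreover rescaling $t\mapsto zt$ does not change the order $\mathcal O$ of any Laurent series, hence it does not change $\mathcal O(\m(J_{\overline p_{k,i_k}}))$ for any $k$, and in particular the quantity $\mathfrak n_Y$ and the divisibility condition (3) of Theorem~\ref{Teo''} are unaffected. Therefore $Y$ and $\gamma_{\hg}(z)(Y)$ satisfy all three hypotheses of Theorem~\ref{Teo''}, so there exists $T\in\Slk$ with $T\,\gamma_{\hg}(z)(Y)\,T^{-1}=Y$. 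One then checks that conjugation by this $T$ also fixes the central term: by (\ref{adjointLG}), $\Ad T(\gamma_{\hg}(z)(Y)+\mu c)=Y+\big(\mu-\mathrm{res}\langle T^{-1}\tfrac{dT}{dt},\gamma_{\hg}(z)(Y)\rangle_t\big)c$, and the residue term vanishes because $T$ can be taken diagonal (as in the proof of Theorem~\ref{Teo''}, with the free parameters $t_k$ chosen to be suitable monomials in $z$ adjusting the determinant) while $\gamma_{\hg}(z)(Y)$ is strictly upper triangular, so the integrand $T^{-1}\tfrac{dT}{dt}\,\gamma_{\hg}(z)(Y)$ is strictly upper triangular and hence has zero $\mathfrak{sl}_n$-trace pairing contribution — more carefully, $\langle\,\cdot\,,\,\cdot\,\rangle_t$ is built from the trace form, and the product of a diagonal matrix with a strictly upper triangular one is strictly upper triangular, so its residue-of-trace is $0$. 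Composing, $\Ad(T d_z)(X)=X'$ lies in the $\Slk$-orbit of the chosen representative, hence in $\Ad\Slk(X)$.

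The main obstacle is the bookkeeping in the last step: one must be careful that the reduction via Theorem~\ref{nilp4} is compatible with tracking the central component, and that the explicit diagonal $T$ produced by Theorem~\ref{Teo''} for the pair $(\gamma_{\hg}(z)(Y),Y)$ genuinely kills the residue cocycle term rather than merely conjugating the $\slnk$-part. An alternative, cleaner route avoids computing the cocycle altogether: show directly that $d_z$ normalizes $\Slk$ inside $\LSo$ and that the induced automorphism $\mathrm{Ad}\,d_z$ of $\slnk$ is $\gamma_{\hg}(z)$, so that $\Ad\LSo(X)=\bigcup_{z}\Ad d_z\big(\Ad\Slk(X)\big)$, and then use the $z$-invariance of the complete set of $\Slk$-conjugacy invariants (partition plus $\mathfrak n_{(\cdot)}\bmod i_d$, from Theorem~\ref{Teo''} and the discussion preceding it) to conclude $\Ad d_z(\Ad\Slk(X))=\Ad\Slk(X)$. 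I would present the explicit-$T$ argument as the main proof and remark on this invariant-theoretic reformulation as the conceptual reason it works.
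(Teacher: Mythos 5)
Your proof is correct and follows essentially the same route as the paper's: reduce to a quasi-Jordan representative $D+\lambda c$ via Theorem \ref{nilp4}, observe that $\Ad d_z$ merely replaces each entry $p(t)$ by $p(zt)$ without changing any orders, and invoke Theorem \ref{Teo''} (whose condition (3) holds trivially because the relevant order difference is $0$). Your explicit check that the diagonal conjugator also kills the residue cocycle term, so that the level $\lambda$ is preserved, is a welcome extra step which the paper's own proof leaves implicit and only justifies later in Theorem \ref{lambda}.
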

\begin{proof}
By Theorem \ref{nilp4}, each nilpotent $\LSo$-\'orbit in $\sgor$ has an element of the form $D+\lambda
c\in\sgor$, with $D$ a cuasi-Jordan matrix and $\lambda\in\C$. Hence, we only have to prove that $\Ad \Slk ( D+\lambda
c)=\Ad \LSo ( D+\lambda c)$, or equivalently that for each $z\in\C$,  $\Ad d_z(D+\lambda c)\in\Ad \Slk (D+\lambda c)$. Let $D=D(J_{\overline{p}_{1,i_1}},...,J_{\overline{p}_{d,i_d}})$, where $J_{\overline{p}_{j,i_j}}=J(p_{j 1},...,p_{j,i_j})$ and $(p_{j,1},...,p_{j,i_j})\in\K^{i_j}$. Then,
\begin{equation}
\Ad d_z(D+\lambda c)=D(J(p_{1,1}(zt),...,p_{1,i_1}(zt)),...,J(p_{d,1}(zt),...,p_{d,i_d}(zt)))+\lambda c
\end{equation}
Denote $D_z=D(J(p_{1,1}(zt),...,p_{1,i_1}(zt)),...,J(p_{d,1}(zt),...,p_{d,i_d}(zt)))$. So, $\mathcal{O}(p(t))=\mathcal{O}(p(zt))$ and $\mathcal{O}(p)=-\mathcal{O}(p^{-1})$ for all $p\in\K$ and $z\in\C$. 
Then, for each $j=1,...,d$, we have that
\begin{equation}
\mathcal{O}(p_{j,1}(t)^{i_j-1}...p_{j,i_j}(t))+\mathcal{O}(p_{j,1}(zt)^{-i_j+1}...p_{j,i_j}(zt)^{-1})=0
\end{equation}
Therefore, by Theorem \ref{Teo''}, the matrices $D$ and $D_z$ are $\Slk$-conjugated.
\end{proof}

\subsection{Level of nilpotent orbits}

The following three lemmas are technical results we need to prove that conjugated nilpotent elements, whose components in $\slnk$ are quasi-Jordan matrices, have the same component in $\C c$. That allows us to define the level of a nilpotent orbit.

\begin{lemma} \label{entnulasT} Let $X=D(J_{\overline{p}_{1,d_1}},...,J_{\overline{p}_{m,d_m}} )$, for some
$\overline{p}_{i,d_i}=(p_{i,1},...,p_{i,d_i-1})\in\K^{d_i-1}$ y, $T=(t_{ij})\in\mathrm{SL}_n(\K)$ such that $TXT^{-1}$  is a quasi-Jordan matrix. If
\begin{center}
$i=r+d_1+\cdots+d_{\alpha}, \mbox{ with } r\leq d_{\alpha}$

$j=s+d_1+\cdots+d_{\beta}, \mbox{ with } s\leq d_{\beta}$
\end{center}
and $1\leq s<r\leq d_1$, then $t_{ij}=0$
\end{lemma}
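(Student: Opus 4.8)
The plan is to work directly with the matrix equation $TX = (TXT^{-1})T$ and extract the relations among the entries $t_{ij}$ block by block. Write $TXT^{-1} = D(J_{\overline{q}_{1,d_1}},\dots,J_{\overline{q}_{m,d_m}})$ for some $\overline{q}$'s; since conjugate quasi-Jordan matrices have the same block sizes (as noted before Theorem~\ref{Teo''}), both $X$ and $TXT^{-1}$ have blocks of sizes $d_1 \geq d_2 \geq \dots \geq d_m$. Right-multiplication by $X$ acts on a row of $T$ by shifting within each column-block: if we index columns in block $\alpha$ by $1,\dots,d_\alpha$, then multiplying by $J_{\overline{p}_{\alpha,d_\alpha}}$ sends the entry in column $k$ to $p_{\alpha,k-1}$ times the entry in column $k-1$ (and kills column $1$). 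Left-multiplication by the quasi-Jordan matrix on the right-hand side acts analogously on the row index. Comparing entries of $TX$ and $(TXT^{-1})T$ in position $(i,j)$ will give a recursion that expresses certain $t_{ij}$ in terms of others with smaller row-index-within-block, exactly as in the derivation of formula~(\ref{Tform}).

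**The key recursion.** First I would fix the row block $\alpha$ containing $i$ (so $i = r + d_1 + \dots + d_{\alpha-1}$ with $1 \leq r \leq d_\alpha$, and the hypothesis is $r \leq d_1$, which since $d_1$ is the largest is automatic, but the real constraint is $s < r$). Looking at the equation row by row as in the computation preceding Lemma~\ref{conjbloq}: the first row of each row-block $T_\alpha$, call it $\overline{t}_{\alpha,1}$, satisfies $\overline{t}_{\alpha,1} X = q_{\alpha,1}\overline{t}_{\alpha,2}$, and iterating, $\overline{t}_{\alpha,r} = (\text{nonzero scalar})\cdot \overline{t}_{\alpha,1} X^{r-1}$. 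So $t_{ij}$, the $j$-th entry of $\overline{t}_{\alpha,r}$, is a scalar multiple of the $j$-th entry of $\overline{t}_{\alpha,1}X^{r-1}$. Now $X^{r-1}$ restricted to the column-block $\beta$ (containing $j$) is a power of the nilpotent quasi-Jordan block $J_{\overline{p}_{\beta,d_\beta}}$, which shifts columns to the right by $r-1$ steps: the $j$-th entry of $(\text{row})\cdot X^{r-1}$ in column-block $\beta$ involves the entry of the original row in column-position $s - (r-1)$ within that block. Since $s < r$, we have $s - (r-1) \leq 0$, so that column position does not exist — the contribution is zero. Hence $t_{ij} = 0$.

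**Organizing the argument.** Concretely I would: (1) record the block-diagonal shapes and the shift-behavior of right-multiplication by a quasi-Jordan block and of the $(r-1)$-th power of such a block; (2) use the recursion from (\ref{Tform}) — legitimate because $TXT^{-1}$ is itself quasi-Jordan, so the derivation producing (\ref{Tform}) applies with the $q_{k,\ell}$'s in place of the $p_{k,\ell}$'s — to write $t_{ij}$ as a scalar times an entry of $\overline{t}_{\alpha,1}X^{r-1}$; (3) observe that in column-block $\beta$, the matrix $X^{r-1}$ has nonzero entries only in positions $(a, a + (r-1))$ for $1 \leq a \leq d_\beta - (r-1)$, so the $(\,\cdot\,, s)$-entry of any row times $X^{r-1}$ pulls from column-position $s-(r-1)$; (4) conclude from $s < r$, i.e. $s - (r-1) \leq 0$, that this is $0$.

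**Main obstacle.** The bookkeeping is the only real difficulty: one must be careful that right-multiplication by $X$ shifts the \emph{column} index within its block while the recursion from (\ref{Tform}) concerns the \emph{row} index within its block, and that these two block-decompositions (of $i$ via $\alpha$ and of $j$ via $\beta$) are independent. I would also need to double-check the edge cases where $\beta < \alpha$, $\beta = \alpha$, or $\beta > \alpha$ all go through uniformly — they should, since the conclusion only uses that within column-block $\beta$ the power $X^{r-1}$ cannot reach column-position $s$ from any column-position $\geq 1$ when $s < r$. The scalars appearing (products of $p$'s and $q^{-1}$'s) are never zero since $X$ and $TXT^{-1}$ are genuine quasi-Jordan matrices with invertible super-diagonal entries, so they never interfere with the vanishing conclusion.
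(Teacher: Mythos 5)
Your proposal is correct and follows essentially the same route as the paper: both use the recursion behind (\ref{Tform}) to write $t_{ij}$ as a nonzero scalar times the $j$-th entry of $\overline{t}_{\alpha,1}X^{r-1}$, and then observe that the first $r-1$ columns within each block of $X^{r-1}$ vanish, so $s<r$ forces $t_{ij}=0$. Your extra bookkeeping about the independence of the row-block $\alpha$ and column-block $\beta$ is sound but not a departure from the paper's argument.
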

\begin{proof}
Let $i=r+d_1+\cdots+ d_{\alpha}, j=s+d_1+\cdots+d_{\beta}$, with $r\leq d_{\alpha}, s\leq d_{\beta}$ and $1\leq s<r\leq d_1$. By Lemma \ref{conjbloq}, there exits $\overline{t}_{\alpha+1}\in\K^n$ such that
$$t_{ij}=\overline{t}_{\alpha+1}\cdot\mbox{col}_j X^{r-1}$$ 
It is easy to show that the columns  $\lambda+d_1+\cdots+d_{\beta}$ of
$X^{r-1}$, for $1\leq\lambda\leq r-1$, are null. So, since $s<r$, the column $j$ of $X^{r-1}$ is null. Hence, $t_{ij}=0$.
\end{proof}

\begin{lemma}\label{entnulasminors} Let $X=D(J_{\overline{p}_{1,d_1}},...,J_{\overline{p}_{m,d_m}} )$, with $\overline{p}_{i,d_i}=(p_{i,1},...,p_{i,d_i-1})\in\K^{d_i-1}$, and $T=(t_{ij})\in\mathrm{SL}_n(\K)$ such that $TXT^{-1}$ is a quasi-Jordan matrix. Let $M_{ij}$ be the determinant of the matrix obtained from $T$ eliminating the row $i$ and the column $j$. If
\begin{center}
$i=r+d_1+\cdots+d_{\alpha}, \mbox{ with } r\leq d_{\alpha}$

$j=s+d_1+\cdots+d_{\beta}, \mbox{ with } s\leq d_{\beta}$
\end{center}
and $1\leq r<s\leq d_1$, then $M_{ij}=0$.
\end{lemma}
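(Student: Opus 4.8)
The plan is to mirror the proof of Lemma \ref{entnulasT}, but transpose the roles of rows and columns by exploiting the identity $T^{-1}XT = (TXT^{-1})$-conjugate structure, or more precisely by passing to the inverse matrix. First I would recall that if $TXT^{-1} = J'$ is a quasi-Jordan matrix, then $T^{-1}$ conjugates $J'$ back to $X$, and the entries $M_{ij}$ of the adjugate of $T$ are, up to the sign $(-1)^{i+j}$ and the factor $\det T = 1$, exactly the entries $(t'_{ji})$ of $T^{-1}$. So the claim $M_{ij}=0$ for $1\le r<s\le d_1$ is equivalent to the statement that the $(j,i)$ entry of $T^{-1}$ vanishes, and I would reduce the lemma to a statement about $T^{-1}$.

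Next I would run the same column-vector analysis as in Lemma \ref{entnulasT}, but applied to $T^{-1}$ and the conjugation $T^{-1} J' T = X$ (equivalently $T^{-1} J' = X T^{-1}$). Writing $T^{-1}$ in block-row form as in (\ref{Tform}) — here the roles of $X$ and $J'$ swap, so the rows of the block $(T^{-1})_\beta$ are $\overline{s}_\beta, \ p_{\beta,1}^{-1}\overline{s}_\beta J', \dots$, with $J'$ the quasi-Jordan matrix — one gets that the $(j,i)$ entry of $T^{-1}$ is $\overline{s}_{\beta+1}\cdot \mathrm{col}_i (J')^{\,s-1}$ (with $j = s + d_1+\cdots+d_\beta$). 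Then the same combinatorial fact used before — that in a quasi-Jordan matrix the columns indexed $\mu + d_1+\cdots+d_\alpha$ for $1\le\mu\le s-1$ of the $(s-1)$-st power are zero — gives that $\mathrm{col}_i(J')^{s-1}=0$ whenever $r<s$, since $i=r+d_1+\cdots+d_\alpha$. Hence the $(j,i)$ entry of $T^{-1}$ is zero, so $M_{ij}=0$.

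The main obstacle I anticipate is bookkeeping: making precise the "transpose" version of formula (\ref{Tform}) for $T^{-1}$ (one must re-derive it from $T^{-1}J' = XT^{-1}$ rather than quote it verbatim, since the earlier derivation was stated for a matrix conjugating a given $X$ to a quasi-Jordan form, not the reverse), and keeping the index shifts consistent — which block $\alpha$ versus $\beta$ and which local index $r$ versus $s$ controls the vanishing. An alternative route that avoids $T^{-1}$ altogether is to expand $M_{ij}$ as a signed sum over permutations of the $(n-1)\times(n-1)$ minor and show every term contains a factor $t_{kl}$ with $l<k$ inside the first block, killing it by Lemma \ref{entnulasT}; but this is messier, so I would prefer the inverse-matrix argument and would only fall back on the direct cofactor expansion if the block-structure of $T$ (namely that $T$ is, by Lemma \ref{conjbloq} and the hypotheses, essentially block-triangular with respect to the first block) does not cleanly propagate to $T^{-1}$.
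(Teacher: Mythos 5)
Your main argument is correct, and it takes a genuinely different route from the paper. The paper proves the lemma by the direct cofactor expansion you list as your fallback: it expands $M_{ij}$ along a row/column, introduces the index sets $I_u$ of positions with a fixed local index inside each block, and uses Lemma \ref{entnulasT} to show that every product $\prod_{a\in S}t_{\sigma_a,a}$ occurring in the expansion contains a forced zero factor. Your route instead observes that, since $\det T=1$, one has $(T^{-1})_{ji}=(-1)^{i+j}M_{ij}$, so the claim is equivalent to the vanishing of the $(j,i)$ entry of $T^{-1}$; and $T^{-1}$ conjugates the quasi-Jordan matrix $J'=TXT^{-1}$ (which, by the discussion preceding Theorem \ref{Teo''}, has the same block sizes $d_1,\dots,d_m$ as $X$) back to the quasi-Jordan matrix $X$, so the analogue of (\ref{Tform}) applies to $T^{-1}$ and gives $(T^{-1})_{ji}$ as a multiple of $\overline{s}_{\beta+1}\cdot\mathrm{col}_i(J')^{s-1}$, which vanishes because the local index $r$ of $i$ satisfies $r\le s-1$. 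The index bookkeeping does come out right: for the entry $(j,i)$ of $T^{-1}$ the row has local index $s$ and the column has local index $r$, so the hypothesis $1\le r<s\le d_1$ of the present lemma is exactly the hypothesis $1\le(\text{column index})<(\text{row index})\le d_1$ of Lemma \ref{entnulasT} applied to $T^{-1}$. What your approach buys is a one-line reduction that replaces the paper's delicate permutation-sum analysis by a second application of the already-established entry formula; what it costs is that you must re-derive (\ref{Tform}) for $T^{-1}$ (as you note, the paper states it only for a matrix conjugating a given $X$ \emph{to} quasi-Jordan form, but the derivation only uses that the target is quasi-Jordan, so it goes through verbatim with $J'$ in place of $X$). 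The only point worth making explicit in a final write-up is the equality of block sizes of $X$ and $J'$, which you use implicitly when you assign the local index $r$ to the column $i$ of $(J')^{s-1}$.
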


\begin{proof}
Denote by
\begin{equation}\label{tren}
j_1=1; \ \ j_l=1+d_1+\cdots+d_{l-1} \mbox{ if } 2\leq l\leq m-1
\end{equation}
and consider the following sets for $0\leq u<d_1$
\begin{equation}
    I_u=\{j_1+u,\cdots,j_{r_u}+u\} \mbox{, donde } d_{r_u}\geq u+1
    \mbox{ and } d_{r_u+1}<u+1.
\end{equation}
Let $i=r+d_1+\cdots+d_{\alpha}$ and $j=s+d_1+\cdots+d_{\beta}$, with
$r<d_{\alpha}$, $s<d_{\beta}$ and $1\leq r<s\leq d_1$. 
Consider two cases:
\begin{itemize}
\item If $i\neq j_l$ for all $1\leq l\leq m-1$, 
\begin{equation*}
    M_{ij}=\sum_{k=1}^m\pm
    t_{j_k,j_l}M_{(i,j_k / j,j_m)}
\end{equation*}
Fix $1\leq k\leq m$ and take
$S=\{1,\cdots,n\}\smallsetminus\{j,j_m\}$,
$S'=\{1,\cdots,n\}\smallsetminus\{i,j_k\}$ and 
$\Omega=\{\sigma:S\rightarrow S' \mbox{ inyectiva}\}$.

Consider $\sigma\in\Omega$  satisfying that $t_{\sigma_a,a}\neq 0$ for all
$a\in S$, then by Lemma \ref{entnulasT}, we have that:

- If $a\in I_0\smallsetminus\{j_m\}$, then $\sigma_a\in
I_0\smallsetminus\{j_k\}$.

- If $1<\lambda<r$ y $a\in I_{\lambda-1}$, then $\sigma_a\in
I_{\lambda-1}$.

- If $a\in I_{r-1}$, then $\sigma_a\in I_{r-1}\smallsetminus\{i\}$.

Hence, there is $a\in I_{r-1}$ such that
$g_{\sigma_a,a}=0$. Therefore, 
$\prod_{a\in S}t_{\sigma_a,a}=0$ for all $\sigma\in\Omega$; then $M_{ij}=0$.

\item If $i=j_l$ for some $1\leq l\leq m-1$, 
\begin{equation*}
    M_{ij}=\sum_{\substack{{k=1} \\ k\neq l}}^m\pm
    t_{j_k,j_l}M_{(i,j_k / j,j_m)}
\end{equation*}
Consider $S, S'$ and $\Omega$ as in the previous case. We have that $t_{\sigma_a,a}\neq 0$ if and only if   $\sigma_a\in I_0\smallsetminus\{j_l,j_k\}$ for all $a\in
I_0\smallsetminus\{j_m\}$. Therefore, 
$\prod_{a\in S}t_{\sigma_a,a}=0$ for all $\sigma\in\Omega$, which means that $M_{ij}=0$.
\end{itemize}
\end{proof}

\begin{lemma}\label{hoja} Let $X=D(J_{\overline{p}_{1,d_1}},...,J_{\overline{p}_{m,d_m}} )$, with $\overline{p}_{i,d_i}=(p_{i,1},...,p_{i,d_i-1})\in\K^{d_i-1}$ and
$T=(t_{ij})\in\mathrm{SL}_n(\K)$ such that $TXT^{-1}$  is a quasi-Jordan matrix. Let $M_{ij}$ be the determinant of the matrix obtained from  $T$ eliminating the row $i$ and the column $j$. If $t_{ij}\neq 0$, then $M_{i,j+1}=0$.
\end{lemma}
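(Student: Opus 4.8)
The plan is to translate the minor $M_{i,j+1}$ into an entry of $T^{-1}$ and then read its vanishing off from the ``position--triangularity'' of $T$ and $T^{-1}$ that Lemmas~\ref{entnulasT} and~\ref{entnulasminors} record. Since $T\in\mathrm{SL}_n(\K)$, the adjugate formula gives $(T^{-1})_{ab}=(-1)^{a+b}M_{ba}$ for all $a,b$, so $M_{i,j+1}=(-1)^{i+j+1}(T^{-1})_{j+1,i}$ and it is enough to prove $(T^{-1})_{j+1,i}=0$. Write $\pi(\ell)$ for the position of an index $\ell$ within its quasi-Jordan block and $B(\ell)$ for that block. First I would apply Lemma~\ref{entnulasT} to $T$ (whose source quasi-Jordan matrix is $X$): the hypothesis $t_{ij}\neq 0$ becomes the inequality $\pi(i)\le\pi(j)$.

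Next I would split according to whether $j$ and $j+1$ lie in the same block. If $B(j+1)=B(j)$, then $\pi(j+1)=\pi(j)+1$, so $\pi(i)\le\pi(j)<\pi(j+1)$; since $T^{-1}$ conjugates the quasi-Jordan matrix $Y:=TXT^{-1}$ (which has the same block sizes as $X$, as recalled before Theorem~\ref{Teo''}) back to the quasi-Jordan matrix $X$, Lemma~\ref{entnulasT} also applies to $T^{-1}$ and yields $(T^{-1})_{j+1,i}=0$ from $\pi(j+1)>\pi(i)$; equivalently, Lemma~\ref{entnulasminors} applied to $T$ with row $i$ and column $j+1$ gives $M_{i,j+1}=0$ directly. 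This settles every $j$ that is not the last index of a block.

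The remaining case, where $j$ terminates a block $\beta$ so that $j+1$ begins the next block and $\pi(j+1)=1$, is where the position inequalities no longer force the vanishing, and it is the step I expect to be the main obstacle. Here I would argue as in the proof of Lemma~\ref{entnulasminors}: expand $M_{i,j+1}=\sum_{\sigma}\mathrm{sgn}(\sigma)\prod_{a\neq j+1}t_{\sigma(a),a}$ over the bijections $\sigma\colon\{1,\dots,n\}\smallsetminus\{j+1\}\to\{1,\dots,n\}\smallsetminus\{i\}$, keep only the terms with all $t_{\sigma(a),a}\neq 0$ (hence $\pi(\sigma(a))\le\pi(a)$), and feed in the explicit shape~(\ref{Tform}) of the rows of $T$ — each a scalar multiple of $\overline t\,X^{\pi(\cdot)-1}$ for the seed row $\overline t$ of its block — together with the constraint that $t_{ij}\neq 0$ places on that seed row and the fact that the $(j+1)$-st column of $T$ lies in $\ker Y$. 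Matching the level sets $L_{\le k}=\{\ell:\pi(\ell)\le k\}$ under $\sigma$ subject to these extra constraints is the delicate point; it is exactly here that the hypothesis $t_{ij}\neq 0$ must be exploited and that care at the block boundary is unavoidable.

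Finally I would note that in the only use made of this lemma the indices $j,j+1$ are consecutive \emph{within a block} — a quasi-Jordan matrix has nonzero entries solely at such positions — so the first case already suffices in practice; the fully general statement is precisely the one demanding the extra bookkeeping of the third paragraph.
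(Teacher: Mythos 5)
Your first two paragraphs reproduce the paper's own argument: from $t_{ij}\neq 0$, the contrapositive of Lemma \ref{entnulasT} gives $r\le s$ for the in-block positions of $i$ and $j$, hence $r<s+1$, and Lemma \ref{entnulasminors} applied to row $i$ and column $j+1$ (whose in-block position is $s+1$) yields $M_{i,j+1}=0$. The detour through the adjugate and $(T^{-1})_{j+1,i}$ is harmless but unnecessary; the direct appeal to Lemma \ref{entnulasminors} is exactly what the paper does.

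Your worry about the block-boundary case is well founded, but not in the way you expect: that case is not merely hard, it is false, so the bookkeeping sketched in your third paragraph cannot succeed. Take $n=2$, $X=0=D(J_1,J_1)$ (partition $[1,1]$) and $T=\left(\begin{smallmatrix}1&1\\1&2\end{smallmatrix}\right)\in\mathrm{SL}_2(\K)$; then $TXT^{-1}=0$ is quasi-Jordan, $t_{11}=1\neq 0$, yet $M_{1,2}=t_{21}=1\neq 0$. The paper's proof silently assumes the within-block situation as well: the step ``since $r<s+1$, by Lemma \ref{entnulasminors}, $M_{i,j+1}=0$'' only makes sense when the position of $j+1$ in its block is $s+1$, i.e.\ when $j$ is not the last index of its block. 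As you correctly observe, this is the only case ever used: in Lemma \ref{carro} the excluded indices $j_2-1,\dots,j_d-1$ (together with $i\le n-1$) force $i$ and $i+1$ to lie in a common block. So the right conclusion is that the lemma needs the extra hypothesis that $j$ is not a block-final index, under which your argument (and the paper's) is complete; no further work on the boundary case is possible or required.
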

\begin{proof} Let $t_{ij}\neq 0$, by Lemma \ref{entnulasT}, we know that $i=r+d_1+\cdots+d_{\alpha}$, $j=s+d_1+\cdots+d_{\beta}$, with $r\leq d_{\alpha}$, $s\leq d_{\beta}$ and $1\leq r\leq s\leq d_1$. Since $r<s+1$, by Lemma \ref{entnulasminors},  $M_{i,j+1}=0$.
\end{proof}

\

In the following lemma, the integers $j_i$ are as in (\ref{tren}).

\begin{lemma}\label{carro} Let $X=D(J_{\overline{p}_{1,d_1}},...,J_{\overline{p}_{m,d_m}} )$, with $\overline{p}_{i,d_i}=(p_{i,1},...,p_{i,d_i-1})\in\K^{d_i-1}$, and
$T=(t_{ij})\in\mathrm{SL}_n(\K)$ such that $TXT^{-1}$  is a
quasi-Jordan matrix. Then, for all
$i\in\{1,\cdots,n\}\smallsetminus\{j_2-1,\cdots,j_d-1\}$, the entry
$(i+1,i)$ of $T^{-1}\frac{dT}{dt}$ is zero.
\end{lemma}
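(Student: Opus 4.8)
The statement asserts that the subdiagonal entry $(i+1,i)$ of $T^{-1}\frac{dT}{dt}$ vanishes whenever $i$ is not one of the "block-boundary" indices $j_2-1,\dots,j_d-1$. The natural route is to use the cofactor formula for the inverse: $(T^{-1})_{kj}=\pm M_{jk}/\det T$, and since $T\in\mathrm{SL}_n(\K)$ we have $\det T=1$, so $(T^{-1})_{kj}=(-1)^{k+j}M_{jk}$. Hence the $(i+1,i)$ entry of $T^{-1}\frac{dT}{dt}$ is
\[
\sum_{k=1}^n (T^{-1})_{i+1,k}\,\frac{dt_{k,i}}{dt}=\sum_{k=1}^n (-1)^{i+1+k}M_{k,i+1}\,\frac{dt_{k,i}}{dt}.
\]
So the plan is to show that for each $k$, at least one of the two factors $M_{k,i+1}$ and $\frac{dt_{k,i}}{dt}$ is zero. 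The derivative $\frac{dt_{k,i}}{dt}$ vanishes whenever $t_{k,i}$ is identically zero; and when $t_{k,i}\neq 0$, Lemma~\ref{hoja} (applied with row $k$, column $i$) gives exactly $M_{k,i+1}=0$. Thus every term in the sum vanishes termwise, and the entry is zero.

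The role of the hypothesis $i\notin\{j_2-1,\dots,j_d-1\}$ is a book-keeping point that must be checked: Lemma~\ref{hoja} is stated for $i$ in a range tied to the first block ($r\le s\le d_1$), and more importantly the cofactor identity $M_{k,i+1}=0$ "whenever $t_{k,i}\neq 0$" needs the column index $i+1$ to still make sense as an index that lies in the \emph{same block} as $i$ (or at least to be in a position where Lemma~\ref{entnulasminors}/\ref{hoja} applies with $r<s+1$). If $i=j_l-1$ for some $l$, then $i$ is the last index of block $l-1$ and $i+1=j_l$ is the first index of the next block, so passing from column $i$ to column $i+1$ crosses a block boundary and Lemma~\ref{hoja} can fail. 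Excluding those indices is precisely what makes the termwise argument go through. I would spell out this adjacency/indexing check carefully, since it is the only non-mechanical part.

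The main obstacle I anticipate is making sure the symmetry/indexing conventions in Lemmas~\ref{entnulasT}--\ref{hoja} are being invoked with the correct transposed roles — those lemmas are phrased with $r\le d_1$ and a specific ordering $i=r+d_1+\cdots+d_\alpha$, and one has to verify that the instance needed here (row $k$ arbitrary, column $i$ with $i$ not a block boundary) genuinely falls under their hypotheses after possibly relabeling. Once that bookkeeping is settled, the proof is a one-line consequence of the cofactor expansion of $T^{-1}\frac{dT}{dt}$ together with Lemma~\ref{hoja}, using $\det T=1$ to drop the denominator.
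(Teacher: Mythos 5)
Your proposal is correct and follows essentially the same route as the paper: expand $\bigl(T^{-1}\frac{dT}{dt}\bigr)_{i+1,i}$ via the cofactor form of $T^{-1}$ (using $\det T=1$) and kill each term of the resulting sum by the dichotomy $t_{k,i}=0$ or, when $t_{k,i}\neq 0$, $M_{k,i+1}=0$ from Lemma~\ref{hoja} (together with Lemma~\ref{entnulasT}). Your explicit remark on where the hypothesis $i\notin\{j_2-1,\dots,j_d-1\}$ enters (so that passing from column $i$ to column $i+1$ does not cross a block boundary) is a bookkeeping point the paper leaves implicit, but it is the same argument.
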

\begin{proof} The entry $(i+1,i)$ of $T^{-1}\frac{dT}{dt}$ is
\begin{equation*} 
    \begin{split}
       \left(T^{-1}\frac{dT}{dt}\right)_{i+1,i}  & =\mathrm{fil}_{i+1}T^{-1}\cdot\mathrm{col}_i\frac{dT}{dt} \\
         & = \mathrm{col}_{i+1}\mathrm{cof}(T)\cdot\mathrm{col}_i
         \frac{dT}{dt}\\
         & = \sum_{j=1}^n
         \mathrm{cof}(T)_{j,i+1}\cdot\frac{dT}{dt}_{j,i},
     \end{split}
\end{equation*}
where $\mathrm{cof}(T)$ denotes the matrix of cofactors of $T$. So, by the Lemmas \ref{entnulasT} and \ref{hoja}, 
each term of the above sum are zero.
\end{proof}

\begin{theorem}\label{lambda} Let $\overline D=D+\lambda c$ and
$\overline D '=D'+\lambda' c$, where $D$ and $D'$ are
quasi-Jordan matrices. If $\mathfrak{O}_{\overline
D}=\mathfrak{O}_{\overline D'}$, then $\lambda=\lambda'$. Moreover, if $g Dg^{-1}=D'$ for some $g\in\Slk$, then $\Ad g(D)=D'$.
\end{theorem}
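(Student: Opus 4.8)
The plan is to reduce the whole theorem to a single residue computation. For the first assertion, I would first note that it follows from the second. Since $\overline D=D+\lambda c$ already has quasi-Jordan component, the preceding theorem gives $\mathfrak{O}_{\overline D}=\Ad\LSo(\overline D)=\Ad\Slk(\overline D)$, so $\mathfrak{O}_{\overline D}=\mathfrak{O}_{\overline D'}$ produces some $g\in\Slk$ with $\Ad g(\overline D)=\overline D'$; comparing the $\slnk$-components forces $gDg^{-1}=D'$. Granting the ``moreover'' claim, i.e. $\Ad g(D)=D'$, and using that $\Ad g$ is a Lie algebra automorphism fixing $c$ by formula (\ref{adjointLG}), we get $D'+\lambda c=\Ad g(D)+\lambda c=\Ad g(\overline D)=\overline D'=D'+\lambda'c$, hence $\lambda=\lambda'$. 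So everything comes down to proving: if $g\in\Slk$ and $gDg^{-1}=D'$ with $D,D'$ quasi-Jordan, then $\Ad g(D)=D'$.

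For this, apply formula (\ref{adjointLG}) with $\mu=0$ and trivial $c$-part: $\Ad g(D)=gDg^{-1}-\big(\mathrm{res}\langle g^{-1}\tfrac{dg}{dt},D\rangle_t\big)c=D'-\big(\mathrm{res}\langle g^{-1}\tfrac{dg}{dt},D\rangle_t\big)c$, so $\Ad g(D)=D'$ is equivalent to $\mathrm{res}\langle g^{-1}\tfrac{dg}{dt},D\rangle_t=0$. Since on $\slnk$ the form $\langle\,,\,\rangle_t$ is a nonzero scalar multiple of $(A,B)\mapsto\mathrm{tr}(AB)$, this reduces to showing $\mathrm{res}\,\mathrm{tr}(MD)=0$, where $M:=g^{-1}\tfrac{dg}{dt}$. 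Here the special shape of $D$ is used: a quasi-Jordan matrix is supported only on the entries $(k,k+1)$, so expanding the trace gives
\[
\mathrm{tr}(MD)=\sum_{k}M_{k+1,k}\,D_{k,k+1},
\]
that is, only the entries of $M$ immediately below the diagonal get paired against the super-diagonal entries of $D$.

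To conclude I would invoke Lemma~\ref{carro} with $T=g$ and $X=D$: it says exactly that the sub-diagonal entry $M_{k+1,k}=(T^{-1}\tfrac{dT}{dt})_{k+1,k}$ is zero unless $k\in\{j_2-1,\dots,j_d-1\}$, i.e. unless $k+1=j_l$ is the first index of one of the diagonal blocks of $D$. But for such a $k=j_l-1$ the index $k$ is the last index of block $l-1$ while $k+1$ is the first index of block $l$, so $D_{k,k+1}=0$ because $D$ is block-diagonal. Hence every summand $M_{k+1,k}D_{k,k+1}$ vanishes, $\mathrm{tr}(MD)=0$ in $\K$, and a fortiori $\mathrm{res}\,\mathrm{tr}(MD)=0$. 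This gives $\Ad g(D)=D'$ and finishes the proof.

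The genuinely delicate input is the chain of structural results Lemmas~\ref{entnulasT}, \ref{hoja} and \ref{carro}, which pin down precisely which entries of a conjugating matrix and of $M=g^{-1}\tfrac{dg}{dt}$ are forced to vanish. Once those are in hand, the present theorem is only the short bookkeeping above: the ``below-diagonal of $M$'' support and the ``super-diagonal of $D$'' support meet only at block boundaries, where $D$ is zero. So the main obstacle has really been absorbed by those preliminary lemmas rather than by this statement.
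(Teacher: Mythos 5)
Your proof is correct and follows essentially the same route as the paper's: both reduce everything to showing $\langle g^{-1}\tfrac{dg}{dt},D\rangle_t=0$ via Lemma~\ref{carro}, the paper phrasing the complementary-support argument in terms of root vectors where you phrase it in terms of the trace pairing of sub-diagonal entries of $M$ against super-diagonal entries of $D$. Your opening reduction (using the preceding theorem to replace a general $g\in\LSo$ by one in $\Slk$) is a point the paper glosses over, and is a welcome extra bit of care.
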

\begin{proof} Let $g\in\LGo$ be such that $\Ad  g(\overline D)=\overline D'$. hence,
\begin{equation}
\Ad g(\overline D)=gDg^{-1}+(\lambda-\mathrm{res}\langle
g^{-1}\frac{dg}{dt},D\rangle_t)c
\end{equation}
This means that $D'=gDg^{-1}$ and
$\lambda'=\lambda-\mathrm{res}\langle
g^{-1}\frac{dg}{dt},D\rangle_t$.

 If we express $D$ and
$g^{-1}\frac{dg}{dt}$ in $\sgor$ as a sum of root vectors with
respect to $\Pi$, it results that the root vectors occurring in $D$ are the ones corresponding to the roots of the form  $\alpha_i+n\delta$, where
$\alpha_i$ is a simple root of $\dot\g$, for
$i\in\{1,\cdots,n\}\smallsetminus\{j_2-1,\cdots,j _d-1\}$ and $n\in\mathbb
Z$;  while the root vectors corresponding to these roots do not occur in 
$g^{-1}\frac{dg}{dt}\in\hg$ by Lemma \ref{carro}. Therefore,
 $\langle
g^{-1}\frac{dg}{dt},D\rangle_t=0$. Hence, $\lambda=\lambda'$
\end{proof}

\

If $X\in\hg$ is nilpotent, we have shown that $X$ is
$\LSo$-conjugated to an element of the form  $D+\lambda c$ where $D$ is
a quasi-Jordan matrix and $\lambda\in\C$. Theorem \ref{lambda} leads to the following definition.

\begin{defn} Let $\mathfrak O_X$ be a nilpotent orbit in $\hg$. If $D+\lambda c\in\mathfrak O_X$, with $D$ a quasi-Jordan matrix and $\lambda\in\C$, then   $\lambda$ is the \emph{level of} $\mathfrak{O}_X$.
\end{defn}

\section{Parameterization}

In this section we construct the parameterization of nilpotent orbits of $\sgor$.

\begin{theorem}\label{pollo} Every nilpotent element in $\sgor$ is conjugated by $\LSo$ to an element of the form  $D_{\sigma,k}+\lambda c$, where $\sigma=[i_1,\cdots,i_d]$ is a partition of $n$, $0\leq k<i_d$, $\lambda\in\C$ and
\begin{equation}\label{gato}
D_{\sigma,k}=\left(
  \begin{array}{cccccccc}
    J_{i_1} &  &  &  &  &  &  & \\
     & \ddots & &  &  &  &  &  \\
     &  & J_{i_{d}-1} &  &  &  &  &  \\
     &  &  & 0 & 1 & 0 & \cdots & 0 \\
     &  &  &  &  & \ddots &  &  \\
     &  &  & \vdots &  &  & 1 & 0 \\
     &  &  &  &  &  & 0 & t^k \\
     &  &  & 0 &  &  & 0 & 0 \\
  \end{array}
\right)
\end{equation}
\end{theorem}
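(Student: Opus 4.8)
The plan is to upgrade the quasi-Jordan representative produced by Theorem~\ref{nilp4} to the normal form $D_{\sigma,k}$ by a single application of the $\Slk$-conjugacy criterion of Theorem~\ref{Teo''}; the only genuinely new ingredient is an elementary congruence that pins the parameter $k$ down to the range $0\le k<i_d$. In detail: since by convention nilpotent orbits in $\sgor$ meet $\s$ (Section~2), after replacing $X$ by an $\LSo$-conjugate we may assume $X\in\s$, and then Theorem~\ref{nilp4} yields $g_1\in\LSo$ with
$$\Ad g_1(X)=D+\lambda c,\qquad D=D(J_{\overline{p}_{1,i_1}},\dots,J_{\overline{p}_{d,i_d}}),$$
where, after reordering the blocks as in the discussion preceding Theorem~\ref{Teo''}, we may take $i_1\ge i_2\ge\cdots\ge i_d$. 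Since this $D$ is $\mathrm{GL}_n(\K)$-conjugate to a genuine Jordan matrix with these block sizes, all of its superdiagonal entries are nonzero, so $\prod_{j=1}^{d}\m(J_{\overline{p}_{j,i_j}})$ is a nonzero element of $\K$ and $N:=\mathfrak n_D=\Or\!\big(\prod_{j=1}^{d}\m(J_{\overline{p}_{j,i_j}})\big)$ is a well-defined integer (possibly negative). Put $\sigma=[i_1,\dots,i_d]\in\mathcal P(n)$.

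Next I would build the target. Let $k$ be the unique integer with $0\le k<i_d$ and $k\equiv N\pmod{i_d}$. The matrix $D_{\sigma,k}$ of \eqref{gato} is strictly upper triangular with blocks of sizes $i_1\ge\cdots\ge i_d$, its first $d-1$ blocks being ordinary Jordan blocks (of sizes $i_1,\dots,i_{d-1}$) and its last block being $J(1,\dots,1,t^k)$, all of whose superdiagonal entries $1,\dots,1,t^k$ are nonzero; hence $D_{\sigma,k}$ is a bona fide quasi-Jordan matrix with associated partition $\sigma$. Moreover the multiplicity of an ordinary Jordan block is $1$ and $\m\big(J(1,\dots,1,t^k)\big)=t^k$, so $\prod\m=t^k$ and $\mathfrak n_{D_{\sigma,k}}=\Or(t^k)=k$. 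Now apply Theorem~\ref{Teo''} to $D$ and $D_{\sigma,k}$: hypotheses (1) and (2) hold by construction, while (3) reads $\mathfrak n_{D_{\sigma,k}}-\mathfrak n_D=k-N\equiv 0\pmod{i_d}$, which is precisely the defining property of $k$. Hence there is $g_2\in\Slk$ with $g_2Dg_2^{-1}=D_{\sigma,k}$, and by the second assertion of Theorem~\ref{lambda} this gives $\Ad g_2(D+\lambda c)=D_{\sigma,k}+\lambda c$. Therefore $g:=g_2g_1\in\LSo$ satisfies $\Ad g(X)=D_{\sigma,k}+\lambda c$, which is the asserted form.

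The substantive content is absorbed entirely by Theorems~\ref{nilp4}, \ref{Teo''} and \ref{lambda}, so what remains is light bookkeeping; accordingly I expect the only (mild) obstacles to be: checking that the block reordering is realizable inside $\Slk$ (a suitable scalar multiple of the corresponding permutation matrix works, since $\C$ contains the needed roots of unity); verifying that $D_{\sigma,k}$ really meets the hypotheses of Theorem~\ref{Teo''}, in particular that $\mathfrak n_{D_{\sigma,k}}=k$; and being careful that $N$ is a possibly negative integer, so that the choice of $k$ with $0\le k<i_d$ and $k\equiv N\pmod{i_d}$ is nonetheless well defined and unambiguous.
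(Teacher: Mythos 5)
Your proposal is correct and follows essentially the same route as the paper: reduce to a quasi-Jordan representative via Theorem~\ref{nilp4}, choose $k$ with $0\le k<i_d$ and $k\equiv\mathfrak n_D\ (\mathrm{mod}\ i_d)$, and conjugate to $D_{\sigma,k}$ by Theorem~\ref{Teo''}. Your extra care — verifying $\mathfrak n_{D_{\sigma,k}}=k$ and invoking the second assertion of Theorem~\ref{lambda} to see that the central component is unchanged under the $\Slk$-conjugation — only makes explicit details the paper leaves implicit.
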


\begin{proof} Let $X\in\sgor$ be a nilpotent element, then 
by Theorem \ref{nilp4}, there exits $g_1\in\LSo$ such that $\Ad g_1 (X)=D+\lambda c$,
where $D=D(J_{\overline{p}_{1,i_1}},...,J_{\overline{p}_{d,i_d}} )$
is a quasi-Jordan matrix. Let
$\mathfrak n_D$ be the order of the 
multiplicities of $D$ as in  (\ref{multjordan'}). Take $\sigma=[i_1,\cdots,i_d]$ and  $k$ so that
 $0\leq k<i_d$ and $k\equiv \mathfrak n_D(\mathrm{mod \ }i_d)$.
 Hence, by Theorem \ref{Teo''},
there exists $g_2\in\Slk$ such that $g_2 Dg_2^{-1}=D_{\sigma,k}$. So, if $g=g_2 g_1$, we have that
$\Ad g (X)=D_{\sigma,k} +\lambda c$.
\end{proof}

\begin{obs} The last theorem implies that given a partition  
$\sigma=[i_1,\cdots,i_d]$ of $n$ and $\lambda\in\C$, there are at most 
$i_d$ nilpotent orbits in $\sgor$ with partition $\sigma$ of level
$\lambda$.
\end{obs}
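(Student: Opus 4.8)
The plan is to assemble the structural results of the preceding sections into a short chain of reductions. Given a nilpotent $X\in\sgor$, Theorem~\ref{nilp4} produces $g_1\in\LSo$ with $\Ad g_1(X)=D+\lambda c$, where $D=D(J_{\overline{p}_{1,i_1}},\dots,J_{\overline{p}_{d,i_d}})$ is a quasi-Jordan matrix with blocks ordered $i_1\geq\cdots\geq i_d$. Since $D$ is $\mathrm{GL}_n(\K)$-conjugate to a Jordan matrix with blocks of sizes $i_1,\dots,i_d$, the tuple $\sigma:=[i_1,\dots,i_d]$ is a partition of $n$ (the sizes add up to $n$ because the matrix block is $n\times n$) and is an invariant of the orbit, and each superdiagonal entry $p_{k,j}$ is nonzero (else the Jordan type of $D$ would drop), so the order of the multiplicities $\mathfrak n_D=\mathcal O\big(\prod_{k}\m(J_{\overline{p}_{k,i_k}})\big)$ is well defined. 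What remains is to normalize $\mathfrak n_D$.

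The key computation is that $D_{\sigma,k}$ is itself a quasi-Jordan matrix of partition $\sigma$ with $\mathfrak n_{D_{\sigma,k}}=k$. Its blocks are the ordinary Jordan blocks $J_{i_1},\dots,J_{i_{d-1}}$ — quasi-Jordan blocks with all superdiagonal entries $1$, hence multiplicity $1$ and order $0$ — together with one size-$i_d$ block whose superdiagonal reads $(1,\dots,1,t^{k})$; by (\ref{multjordan'}) this last block has multiplicity $1^{i_d-1}\cdots 1^{2}\cdot(t^{k})^{1}=t^{k}$, so $\mathfrak n_{D_{\sigma,k}}=\mathcal O(t^{k})=k$. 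Moreover $t^{k}\neq 0$, so that block is $\mathrm{GL}_{i_d}(\K)$-conjugate to the standard Jordan block $J_{i_d}$ and $D_{\sigma,k}$ indeed has Jordan type $\sigma$.

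Now I would choose $k$ with $0\leq k<i_d$ and $k\equiv\mathfrak n_D\pmod{i_d}$. Then $D$ and $D_{\sigma,k}$ have the same number of blocks and the same block sizes — conditions (1) and (2) of Theorem~\ref{Teo''} — and $\mathfrak n_{D_{\sigma,k}}-\mathfrak n_D=k-\mathfrak n_D$ is a multiple of $i_d$, which is condition (3), $i_d$ being the smallest block size. Hence Theorem~\ref{Teo''} yields $g_2\in\Slk$ with $g_2Dg_2^{-1}=D_{\sigma,k}$. Regarding $g_2$ as an element of $\Slk\subset\LSo$, the second assertion of Theorem~\ref{lambda} (applicable since $D$ and $D_{\sigma,k}$ are quasi-Jordan matrices) upgrades this to $\Ad g_2(D)=D_{\sigma,k}$, hence $\Ad g_2(D+\lambda c)=D_{\sigma,k}+\lambda c$. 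Taking $g=g_2g_1\in\LSo$ gives $\Ad g(X)=D_{\sigma,k}+\lambda c$, as claimed.

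Essentially all the difficulty already lives in the earlier theorems, so this is mostly plumbing; the one place that needs genuine care is the bookkeeping in the second paragraph — verifying that $D_{\sigma,k}$ carries the partition $\sigma$ and that its order of multiplicities is exactly $k$, so that condition (3) of Theorem~\ref{Teo''} collapses precisely to the congruence $k\equiv\mathfrak n_D\pmod{i_d}$. One should also confirm that Theorem~\ref{nilp4}, stated for $\s$, applies to a nilpotent $X\in\sgor$ (this uses the convention that nilpotent orbits of $\hg$ meet $\g$); and if one only wants the existence of \emph{some} $\lambda\in\C$ the appeal to Theorem~\ref{lambda} may be omitted.
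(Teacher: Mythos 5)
Your argument is correct and follows the paper's own route: the remark is intended as an immediate corollary of the preceding normal-form theorem, and what you have written is essentially a (more detailed) reproof of that theorem — the reduction via Theorem~\ref{nilp4} to a quasi-Jordan representative $D+\lambda c$, the computation $\mathfrak n_{D_{\sigma,k}}=\mathcal O(t^k)=k$, the choice of $k$ with $0\le k<i_d$ and $k\equiv\mathfrak n_D\ (\mathrm{mod}\ i_d)$ feeding condition (3) of Theorem~\ref{Teo''}, and the appeal to Theorem~\ref{lambda} to keep the central component equal to $\lambda$ are exactly the steps of the paper's proof. The one thing you never actually write down is the assertion being proved: having shown that every nilpotent orbit of partition $\sigma$ and level $\lambda$ contains one of the $i_d$ elements $D_{\sigma,0}+\lambda c,\dots,D_{\sigma,i_d-1}+\lambda c$, you should add the one-line conclusion that the set of such orbits is therefore contained in $\{\mathfrak{O}_{D_{\sigma,k}+\lambda c}\,:\,0\le k<i_d\}$ and so has at most $i_d$ elements; note that this upper bound needs neither Proposition~\ref{k=k'} nor injectivity of $k\mapsto\mathfrak{O}_{D_{\sigma,k}+\lambda c}$. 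Your two side observations — that the superdiagonal entries $p_{k,j}$ must be nonzero for $\mathfrak n_D$ to be defined, and that Theorem~\ref{nilp4} applies to orbits of $\sgor$ because these are taken to meet $\g$ — are legitimate points the paper glosses over.
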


\begin{prop}\label{k=k'} Let $\sigma=[i_1^{k_1},i_2^{k_2},\cdots,i_s^{k_s}]$ be a
partition of $n$ such that $i_1>i_2>\cdots>i_s$, and $0\leq k',
k<i_s$. Then $D_{\sigma,k}$ and $D_{\sigma,k'}$ are $\Slk$-conjugated if and only if
$k=k'$.

\end{prop}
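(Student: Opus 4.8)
The plan is to reduce the statement to a numerical criterion already available from the theory of quasi-Jordan matrices, namely Theorem \ref{Teo''} and Corollary \ref{obsconjbloq}. First I would observe that $D_{\sigma,k}$ is (up to the trivial reordering of blocks) a quasi-Jordan matrix whose blocks all have multiplicity $1$ except possibly for the last block of size $i_s$, which is the quasi-Jordan block $J(1,\dots,1,t^k)$ with multiplicity $t^k$, hence of order $k$. Consequently the order of the multiplicities is $\mathfrak{n}_{D_{\sigma,k}}=\mathcal{O}(t^k)=k$, and likewise $\mathfrak{n}_{D_{\sigma,k'}}=k'$.

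For the backward direction there is nothing to do: if $k=k'$ then $D_{\sigma,k}=D_{\sigma,k'}$ and they are trivially $\Slk$-conjugated. For the forward direction, suppose $D_{\sigma,k}$ and $D_{\sigma,k'}$ are $\Slk$-conjugated. They then have the same partition $\sigma$, so conditions (1) and (2) of Theorem \ref{Teo''} hold. Since $\Slk$-conjugacy is in particular $\mathrm{GL}_n(\K)$-conjugacy, and conversely (by Theorem \ref{Teo''}) $\mathrm{GL}_n(\K)$-conjugate quasi-Jordan matrices with the same partition are $\Slk$-conjugate precisely when condition (3) holds, I would invoke the converse reasoning: the proof of Lemma \ref{conjbloq} and Theorem \ref{Teo''} shows that a conjugating matrix $T$ can be taken block-diagonal and that $\det T = \prod_k t_k^{i_k}\,\mathfrak{m}(J_{\overline{p}_{k,i_k}})\mathfrak{m}(J_{\overline{q}_{k,i_k}})^{-1}$; the requirement $\det T=1$ forces a product of multiplicity-ratios to be an $i_d$-th power, which — since the smallest block has size $i_s$ and only the last multiplicity is nontrivial — translates into $\mathfrak{n}_{D_{\sigma,k'}}-\mathfrak{n}_{D_{\sigma,k}}=k'-k$ being a multiple of $i_s$. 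But $0\le k,k'<i_s$, so $|k'-k|<i_s$, and the only multiple of $i_s$ in that range is $0$; hence $k=k'$.

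The main obstacle, and the step I would write out most carefully, is the forward direction's "only if" implication: one must know not merely that condition (3) of Theorem \ref{Teo''} is \emph{sufficient} for $\Slk$-conjugacy, but that (together with (1) and (2)) it is also \emph{necessary}. This is the analogue, for general quasi-Jordan matrices, of Corollary \ref{obsconjbloq} for single blocks. I would establish it by the same determinant bookkeeping used in Lemma \ref{conjbloq}: given any $T\in\Slk$ with $TD_{\sigma,k}T^{-1}=D_{\sigma,k'}$, the formula (\ref{Tform}) for the rows of $T$ forces $\det T$ to factor as (a unit product of $t_{i}$'s)$\cdot\prod_k \mathfrak{m}(J_{\overline{p}_{k,i_k}})\mathfrak{m}(J_{\overline{q}_{k,i_k}})^{-1}$, and since the $t_i$ lie in $\K$ and the smallest block has size $i_s$, taking orders of both sides of $\det T=1$ and reducing modulo $i_s$ yields $\mathcal{O}\bigl(\prod_k \mathfrak{m}(J_{\overline{p}_{k,i_k}})\mathfrak{m}(J_{\overline{q}_{k,i_k}})^{-1}\bigr)=\mathfrak{n}_{D_{\sigma,k'}}-\mathfrak{n}_{D_{\sigma,k}}\equiv 0 \pmod{i_s}$, which is exactly the constraint $k'\equiv k\pmod{i_s}$ needed to conclude $k=k'$. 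The bound $i_d=i_s$ for this partition (the last, smallest part) and the range restriction $0\le k,k'<i_s$ then finish the argument.
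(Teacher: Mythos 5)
Your reduction to a ``converse of Theorem \ref{Teo''}'' has a genuine gap, and the gap is fatal to the argument as written. You assert that for an arbitrary $T\in\Slk$ with $TD_{\sigma,k}T^{-1}=D_{\sigma,k'}$, formula (\ref{Tform}) forces $\det T$ to factor as ``a unit product of $t_i$'s'' times the product of multiplicity ratios, whence $k'-k\equiv 0\pmod{i_s}$. Two things go wrong. First, (\ref{Tform}) determines the rows of $T$ from seed \emph{vectors} $\overline t_1,\dots,\overline t_d\in\K^n$, not scalars: a general conjugator is not block-diagonal and its determinant does not visibly factor at all. Extracting a usable factorization from an arbitrary $T$ is exactly the content of steps 1--4 of the paper's proof, which isolate the sub-determinants $M_j$ built from the rows and columns $L_{t,r}$ and prove $\det T=t^{k-k'}(\det M_1)^{i_1-i_2}\cdots(\det M_s)^{i_s}$; none of that work can be skipped. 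Second, and more seriously, even in the block-diagonal case the scalars are not units: the seed $t_j$ of the $j$-th block enters $\det T$ with exponent $i_j$, so it contributes $i_j\,\mathcal{O}(t_j)$ to $\mathcal{O}(\det T)$, and the condition $\det T=1$ only forces $k'-k\in i_1\mathbb{Z}+\cdots+i_d\mathbb{Z}=\gcd(i_1,\dots,i_d)\,\mathbb{Z}$, not $i_d\mathbb{Z}$. Thus the ``necessity of condition (3)'' that your whole argument rests on is false whenever $\gcd(\sigma)<i_d$, and what you actually prove is only $k\equiv k'\pmod{\gcd(\sigma)}$.

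A concrete counterexample to your key step: take $\sigma=[3,2]$, so $i_s=2$, and $T=\mathrm{diag}(t,t,t,t^{-1},t^{-2})\in\mathrm{SL}_5(\K)$. Then $\det T=1$ and a direct computation gives $TD_{\sigma,0}T^{-1}=D_{\sigma,1}$, although $\mathfrak n_{D_{\sigma,1}}-\mathfrak n_{D_{\sigma,0}}=1$ is not a multiple of $2$. You should be aware that this example also collides with step 5 of the paper's own proof (where it is claimed that $t^{k'-k}=\prod_j P_j^{m_j}$ with $m_j\geq i_s$ has no solution, overlooking that the orders $\mathcal{O}(P_j)$ may have mixed signs) and hence with the statement of the proposition itself; the invariant that survives this analysis is the class of $k$ modulo $\gcd(i_1,\dots,i_s)$ rather than modulo $i_s$. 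But independently of that, your proposal does not constitute a proof of the statement: the step you yourself flag as ``the main obstacle'' is precisely the one that fails.
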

\begin{proof} Suposse there exists
$T\in\Slk$ as in (\ref{Tform}), conjugated to $D_k=D_{\sigma,k}$ and to $D_{k'}=D_{\sigma,k'}$.

1. Denote by
\begin{equation}
l_1=0; \ \ l_m=\sum_{j=1}^{m-1}k_j i_j, \mbox{ for } 2\leq m\leq s
\end{equation}

Let $M$ be the matrix obtained by taking from  $T$ the rows and columns 
 $$L_{t,r}=1+l_t+(r-1)i_t, \mbox{ con } 1\leq t\leq s \mbox{ y } 1\leq r\leq
 k_t$$
 and $M_j$ the matrix obtained by taking the first 
 $k_1+\cdots+k_j$ rows and columns from $M$, for $1\leq j\leq s$. Particularly, $M=M_s$.
Set $$T_{r,p}^{u,v}=t_{L_{r,u},L_{p,v}}$$

 2.  For each $1\leq r\leq s$
 \begin{equation}\label{caza}
 T_{r,p}^{u,v}=0, \mbox{ if } 1\leq p\leq r-1, \  1\leq u\leq k_r  \mbox{ and } 1\leq v\leq k_p
 \end{equation}

Fix $1\leq r\leq s$ and take $1\leq u\leq k_r$. By
Proposition \ref{conjbloq}, 
$$\mathrm{fil}_{L_{r,u}+(i_r-1)}(TD_k)=\mathrm{fil}_{L_{r,u}}(T)D_k^{i_r}$$
So, as $i_p<i_r$ if $1\leq p<r$, then
$L_{p,v},L_{p,v}+i_r$ of $D_k^{i_r}$ is 1 if $1\leq v\leq k_p$. Therefore,
$$(TD_k)_{L_{r,u},L_{p,v}+i_r}=t_{L_{r,u},L_{p,v}}=T_{r,p}^{u,v}.$$
On the other hand,
$\mathrm{fil}_{L_{r,u}+(i_r-1)}(D_{k'}T)=\mathrm{fil}_{L_{r,u}+(i_r-1)}(D_{k'})T$
is null. Since $TD_k=D_{k'}T$, the statement (\ref{caza}) occurs.

3. According to the notation in (1), by (2) 
$$M_j=\left(
      \begin{array}{cccc}
        M_{j-1} & [T_{r,j}^{u,v}]_{1\leq r\leq j-1}  \\
        0 & [T_{j,j}^{u,v}]   \\
      \end{array}
    \right)$$
for each $1\leq j\leq s$. Such that $\det M_j=\det
M_{j-1}\det[T_{j,j}^{u,v}]$

 4. We have that $\det T=t^{k-k'}(\det M_1)^{i_1-i_2}(\det M_2)^{i_2-i_3}\cdots(\det
 M_{s-1})^{i_{s-1}-i_s}(\det M_s)^{i_s}$.

5. Let $P_i=\det M_i$. Then, by (3) and (4),  $\det
T=t^{k-k'}\prod_{j=1}^s P_j^{m_j}$,
 where $m_j\geq i_s$ for all $1\leq j\leq s$.
If we consider $1\leq k<k'\leq i_s$, then $0<k'-k<i_s$.
Hence, there are no elements $P_i$ in $\K$ for which
 $t^{k'-k}=\prod_{j=1}^s P_j^{m_j}$ if $m_j\geq
i_s$. Therefore, it is not possible to obtain $T\in\Slk$ such that $TD_k
T^{-1}=D_{k'}$.
\end{proof}

The parameterization of nilpotent orbits in $\sgor$ is given by the following result.
\begin{theorem}\label{clas}  There is a biyective correspondence between nilpotent orbits in
 $\sgor$ and the set
 $\{(\sigma,j):\sigma=[i_1,...,i_d]\in\mathcal{P}(n) \
\mathrm{ y } \ 0\leq j<i_d\}\times\C$.
\end{theorem}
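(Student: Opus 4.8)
The plan is to combine the existence result (Theorem \ref{pollo}) with the separation results already established. First I would set up the map: given a nilpotent orbit $\mathfrak O$ in $\sgor$, Theorem \ref{pollo} guarantees it contains an element $D_{\sigma,k}+\lambda c$ with $\sigma=[i_1,\dots,i_d]\in\mathcal P(n)$, $0\le k<i_d$, and $\lambda\in\C$; send $\mathfrak O$ to the triple $((\sigma,k),\lambda)$. The substance of the proof is to verify that this assignment is well defined (independent of the chosen distinguished representative) and bijective.

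For well-definedness and injectivity, suppose $D_{\sigma,k}+\lambda c$ and $D_{\sigma',k'}+\lambda' c$ lie in the same orbit. By Theorem \ref{lambda} the levels agree, so $\lambda=\lambda'$, and moreover the conjugating element $g\in\Slk$ actually satisfies $\Ad g(D_{\sigma,k})=D_{\sigma',k'}$, i.e. $D_{\sigma,k}$ and $D_{\sigma',k'}$ are $\Slk$-conjugate quasi-Jordan matrices. Since $\Slk$-conjugate quasi-Jordan matrices are in particular $\mathrm{GL}_n(\K)$-conjugate, the Jordan normal form forces $\sigma=\sigma'$ (same block sizes). It then remains to deduce $k=k'$: this is exactly Proposition \ref{k=k'}, which shows $D_{\sigma,k}$ and $D_{\sigma,k'}$ are $\Slk$-conjugate only when $k=k'$. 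Hence the triple attached to $\mathfrak O$ does not depend on the representative, and distinct orbits give distinct triples.

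For surjectivity, I would take an arbitrary $((\sigma,k),\lambda)$ with $\sigma=[i_1,\dots,i_d]\in\mathcal P(n)$ and $0\le k<i_d$, and simply observe that $D_{\sigma,k}+\lambda c$ is a nilpotent element of $\sgor$: its $\slnk$-component is a nilpotent matrix (it is strictly upper triangular, conjugate over $\mathrm{GL}_n(\K)$ to a Jordan nilpotent) with zero $d$-component, so by the results of Section 3 it is locally nilpotent, hence nilpotent. Its orbit maps back to $((\sigma,k),\lambda)$ by construction, so the map is onto.

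The main obstacle is really bookkeeping rather than a new idea: one must be careful that Theorem \ref{lambda} is applied in the form that yields not merely $\lambda=\lambda'$ but also that the conjugator can be taken to fix the quasi-Jordan part via $\Ad$, so that Proposition \ref{k=k'} (which is a statement about $\Slk$-conjugacy of the $D_{\sigma,k}$) is applicable. A minor subtlety is matching the normalization $i_1\ge\cdots\ge i_d$ used throughout with the grouping $\sigma=[i_1^{k_1},\dots,i_s^{k_s}]$ in Proposition \ref{k=k'}; since $k$ only ranges up to $i_d=i_s$, the two formulations agree on the relevant range. Once these alignments are made explicit, the theorem follows immediately by assembling Theorems \ref{pollo} and \ref{lambda} with Proposition \ref{k=k'}.
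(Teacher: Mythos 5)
Your proposal is correct and follows essentially the same route as the paper: define the map via Theorem \ref{pollo}, then separate orbits using the Jordan-form argument for the partition, Proposition \ref{k=k'} for the exponent $k$, and Theorem \ref{lambda} for the level $\lambda$, with surjectivity immediate from the explicit representatives $D_{\sigma,k}+\lambda c$. The only cosmetic difference is that the paper cites Corollary \ref{obsconjbloq} where you invoke the $\mathrm{GL}_n(\K)$ Jordan-form comparison directly, but these are the same observation.
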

\begin{proof} Denote  the set of nilpotent orbits in $\sgor$ by $\mathfrak O(\sgor)$ .
Consider the application
\begin{equation}
\Psi:\mathfrak
O(\sgor)\rightarrow\{(\sigma,j):\sigma=[i_1,...,i_d]\in\mathcal{P}(n) \
\mathrm{ and } \ 0\leq j<i_d\}
\end{equation}
 defined as follows: Let $X\in\sgor$ be a 
nilpotent element, then by Corollary \ref{pollo}, there exist
$g\in\LSo$, a partition $\sigma=[i_1,...,i_d]$ of $n$ and 
$0\leq k<i_d$, such that $\mathrm{Ad} g X=D_{\sigma,k}+\lambda c$.
Hence,
\begin{equation}\label{def.}
\Psi(\mathfrak{O}_X)=([i_1,...,i_d],k,\lambda)
\end{equation}

Observe that:

 1. $\Psi$ is well defined: Let $X,Y\in\sgor$ be $\LSo$-conjugated nilpotent elements. Suposse that there exist $g_1,g_2\in\LSo$,
 a partition $\sigma_1=[i_1,\cdots,i_d], \sigma_2=[j_1,\cdots,j_r]$ of
 $n$, $0\leq k_1<i_d$, $0\leq k_2<j_r$ and $\lambda_1,\lambda_2\in\C$
 such that
 $$\Ad g_1( X)=D_{\sigma_1,k_1}+\lambda_1 c, \ \Ad g_2
 (Y)=D_{\sigma_2,k_2}+\lambda_2 c.$$
Since $D_{\sigma_1,k_1}+\lambda_1 c$ and
$D_{\sigma_2,k_2}+\lambda_2 c$ are
$\LSo$-conjugated, then by Corollary \ref{obsconjbloq}, we have that
$\sigma_1=\sigma_2$; by the Proposition \ref{k=k'}, $k_1=k_2$ and
by Theorem \ref{lambda}, $\lambda_1=\lambda_2$. Therefore, $\Psi$ is well defined.

 2. $\Psi$ is inyective: if $X,Y$ are nilpotent elements in $\s$,
 and
$$
\Psi(\mathfrak{O}_X)=\Psi(\mathfrak{O}_Y)=(\sigma=[i_1,\cdots,i_d],k,\lambda)
$$
then there exit $g_1,g_2\in\LSo$ such that $\Ad g_1
 X=D_{\sigma,k}+\lambda c=\Ad g_2 Y$. Hence, $X$ and $Y$ are
 $\LSo$-conjugated, so $\mathfrak{O}_X=\mathfrak{O}_Y$.

3. $\Psi$ is suryective: Let $\sigma=[i_1,\cdots,i_d]$ be a partition of $n$, $0\leq k<i_d$ and $\lambda\in\C$. 
Then
$\Psi(\mathfrak{O}_{D_{\sigma,k}+\lambda c})=(\sigma,k,\lambda)$.
\end{proof}

\begin{obs} If $\lambda\in\C$, then the number of nipotent orbits in 
 $\sgor$ of level $\lambda$ is finite.
\end{obs}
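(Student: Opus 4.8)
The plan is to construct an explicit bijection $\Psi$ between the set $\mathfrak O(\sgor)$ of nilpotent orbits and the parameter set $S=\{(\sigma,j):\sigma=[i_1,\dots,i_d]\in\mathcal P(n),\ 0\le j<i_d\}\times\C$, and then verify it is well defined, injective, and surjective. The map $\Psi$ is defined on an orbit $\mathfrak O_X$ by first invoking Theorem \ref{pollo} to produce $g\in\LSo$, a partition $\sigma=[i_1,\dots,i_d]$, an integer $0\le k<i_d$ and $\lambda\in\C$ with $\Ad g(X)=D_{\sigma,k}+\lambda c$, and then setting $\Psi(\mathfrak O_X)=(\sigma,k,\lambda)$.

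The main work is showing $\Psi$ is well defined, i.e.\ independent of all the choices. So I would take two nilpotent elements $X,Y$ lying in the same $\LSo$-orbit and suppose $\Ad g_1(X)=D_{\sigma_1,k_1}+\lambda_1 c$ and $\Ad g_2(Y)=D_{\sigma_2,k_2}+\lambda_2 c$; since $X,Y$ are conjugate, the two distinguished forms $D_{\sigma_1,k_1}+\lambda_1 c$ and $D_{\sigma_2,k_2}+\lambda_2 c$ are $\LSo$-conjugate, and I must show the triples coincide. Equality of partitions $\sigma_1=\sigma_2$ follows because $\LSo$-conjugate quasi-Jordan matrices are in particular $\mathrm{GL}_n(\K)$-conjugate, hence have the same Jordan type (the discussion preceding Theorem \ref{Teo''}, or Corollary \ref{obsconjbloq}). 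Given $\sigma_1=\sigma_2=:\sigma$, equality $k_1=k_2$ is exactly Proposition \ref{k=k'} applied to $D_{\sigma,k_1}$ and $D_{\sigma,k_2}$. Finally $\lambda_1=\lambda_2$ is Theorem \ref{lambda}, which says conjugate elements of the form $D+\lambda c$ with $D$ quasi-Jordan have the same $c$-component (the level is well defined). Injectivity is then immediate: if $\Psi(\mathfrak O_X)=\Psi(\mathfrak O_Y)=(\sigma,k,\lambda)$ then both $X$ and $Y$ are $\LSo$-conjugate to the single element $D_{\sigma,k}+\lambda c$, hence to each other, so $\mathfrak O_X=\mathfrak O_Y$. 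Surjectivity is also immediate: for any $(\sigma,k,\lambda)\in S$ the element $D_{\sigma,k}+\lambda c$ is nilpotent (its $\slnk$-part is nilpotent, and Lemma \ref{der}/the results of Section 3 guarantee nilpotency), and $\Psi(\mathfrak O_{D_{\sigma,k}+\lambda c})=(\sigma,k,\lambda)$.

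The step I expect to be the real obstacle is not in this proof at all but in the results it invokes — in particular Proposition \ref{k=k'} (that the residue $k$ is a genuine invariant, i.e.\ $D_{\sigma,k}$ and $D_{\sigma,k'}$ are never $\Slk$-conjugate for $k\ne k'$) and Theorem \ref{lambda} (that the level $\lambda$ is an invariant, via the vanishing $\langle g^{-1}\tfrac{dg}{dt},D\rangle_t=0$ coming from Lemma \ref{carro}). Assuming those, the assembly of the bijection is essentially bookkeeping: one organizes the verification as the three bullet points "well defined / injective / surjective" exactly as in the lemmas already proved, being careful that Theorem \ref{pollo} is applied with its full output (partition, residue, and level simultaneously) so that the same $g$ realizes all three components of the triple. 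A minor point to be explicit about is that $\LSo$-conjugacy and $\Slk$-conjugacy agree on nilpotent elements of the form $D+\lambda c$, which is why Proposition \ref{k=k'} (stated for $\Slk$) suffices to separate orbits under the larger group $\LSo$; this is recorded in the theorem stating $\Ad\Slk(X)=\Ad\LSo(X)$ for nilpotent $X$, and in Theorem \ref{lambda}'s second assertion.
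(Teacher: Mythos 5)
Your write-up is, in substance, a proof of Theorem \ref{clas} (the bijection between nilpotent orbits and $\{(\sigma,j):\sigma=[i_1,\dots,i_d]\in\mathcal{P}(n),\ 0\le j<i_d\}\times\C$), and it follows the paper's own argument for that theorem step by step: well-definedness via Corollary \ref{obsconjbloq}, Proposition \ref{k=k'} and Theorem \ref{lambda}, then injectivity and surjectivity. However, the statement you were asked to prove is the finiteness remark, and your proposal never actually draws that conclusion: nowhere do you fix $\lambda$ and count. The missing (one-line) step is this: by Theorem \ref{lambda} the level is a well-defined invariant of an orbit, so the orbits of level $\lambda$ are exactly the fiber of $\Psi$ over $\{(\sigma,j)\}\times\{\lambda\}$, and $\Psi$ restricts to a bijection between them and the set $\{(\sigma,j):\sigma=[i_1,\dots,i_d]\in\mathcal{P}(n),\ 0\le j<i_d\}$; this set is finite because $\mathcal{P}(n)$ is finite and, for each partition, $j$ ranges over at most $i_d\le n$ values. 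Since the paper has already established Theorem \ref{clas}, the economical proof of the remark is simply to cite it and make this counting observation; re-deriving the whole bijection is correct but redundant, and without the final count the statement itself remains unproved.
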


\begin{exmp} Let $\g=\mathrm{sl}_4^{(1)}(\C)$. The partition of $4$ are $[1,1,1,1], [1,1,2],[1,3],[2,2],[4]$, then $\mathfrak{sl}_4(\C)$ has $5$ nilpotent orbits. 
The nilpotent orbits of level 0 in $\hg$ are the following.

\begin{center}
\begin{tabular}{|c|c|c|}
  \hline
  Partici\'on &      k        & $X_{\mathfrak d}$ \\ \hline
   $[1,1,1,1]$& 0 & $\left(
                                  \begin{array}{cccc}
                                    0 & 0 & 0 & 0 \\
                                    0 & 0 & 0 & 0 \\
                                    0 & 0 &0 & 0 \\
                                    0 & 0 & 0 & 0 \\
                                  \end{array}
                                \right)$ \\ \hline
   $[2,1,1]$  & 0 & $\left(
                                  \begin{array}{cccc}
                                    0 & 1 & 0 & 0 \\
                                    0 & 0 & 0 & 0 \\
                                    0 & 0 &0 & 0 \\
                                    0 & 0 & 0 & 0 \\
                                  \end{array}
                                \right)$  \\  \hline
      $[2,2]$        & 0  & $\left(
                                  \begin{array}{cccc}
                                    0 & 1 & 0 & 0 \\
                                    0 & 0 & 0 & 0 \\
                                    0 & 0 &0 & 1 \\
                                    0 & 0 & 0 & 0 \\
                                  \end{array}
                                \right)$ \\  \hline
   $[2,2]$    & 1     & $\left(
                                  \begin{array}{cccc}
                                    0 & 1 & 0 & 0 \\
                                    0 & 0 & 0& 0 \\
                                    0 & 0 &0 & t \\
                                    0 & 0 & 0 & 0 \\
                                  \end{array}
                                \right)$ \\  \hline
         $[3,1]$     &  0     & $\left(
                                  \begin{array}{cccc}
                                    0 & 1 & 0 & 0 \\
                                    0 & 0 & 1 & 0 \\
                                    0 & 0 &0 & 0 \\
                                    0 & 0 & 0 & 0 \\
                                  \end{array}
                                \right)$ \\  \hline
           $[4]$   &   0    & $\left(
                                  \begin{array}{cccc}
                                    0 & 1 & 0 & 0 \\
                                    0 & 0 & 1 & 0 \\
                                    0 & 0 &0 & 1 \\
                                    0 & 0 & 0 & 0 \\
                                  \end{array}
                                \right)$     \\  \hline
   $[4]$    & 1     & $\left(
                                  \begin{array}{cccc}
                                    0 & 1 & 0 & 0 \\
                                    0 & 0 & 1 & 0 \\
                                    0 & 0 &0 & t \\
                                    0 & 0 & 0 & 0 \\
                                  \end{array}
                                \right)$  \\ \hline
   $[4]$ & \ 2 \ &  $\left(
                                  \begin{array}{cccc}
                                    0 & 1 & 0 & 0 \\
                                    0 & 0 & 1 & 0 \\
                                    0 & 0 &0 & t^2 \\
                                    0 & 0 & 0 & 0 \\
                                  \end{array}
                                \right)$ \\ \hline
   $[4]$ & 3  &  \ $\left(
                                  \begin{array}{cccc}
                                    0 & 1 & 0 & 0 \\
                                    0 & 0 & 1 & 0 \\
                                    0 & 0 &0 & t^3 \\
                                    0 & 0 & 0 & 0 \\
                                  \end{array}
                                \right)$ \ \\ \hline

\end{tabular}
\end{center}
\end{exmp}

\providecommand{\href}[2]{#2}

\end{document}